\newcommand{\mult}{\text{mult}}
\newcommand{\s}{\varepsilon}
\newcommand{\str}{\mathcal{O}}
\newcommand{\proj}{\mathbb{P}}
\numberwithin{equation}{section}
\newtheorem{theorem}{Theorem}[section]
\newtheorem*{theorem*}{Theorem}
\newtheorem{proposition}[theorem]{Proposition}
\newtheorem{lemma}[theorem]{Lemma}
\newtheorem{corollary}[theorem]{Corollary}
\theoremstyle{definition}
\newtheorem{question}[theorem]{Question}
\newtheorem{remark}[theorem]{Remark}
\newtheorem{example}[theorem]{Example}
\begin{document}
\title{Positivity on Blow-up of hyperelliptic surfaces}
\author[Praveen Roy]{Praveen Kumar Roy}
\address{UM-DAE Centre for Excellence in Basic Sciences, University of Mumbai Santacruz, Mumbai 400098, India}
\email{praveen.roy@cbs.ac.in}

\subjclass[2010]{14C20}
\keywords{Seshadri constants, ampleness, rationality}

\maketitle

\begin{abstract}
Let $X_r$ denote the blow-up of the hyperelliptic surface $X$ at $r$ very general points. In this paper, we first provide a criterion for the ampleness of a line 
bundle on $X_r$ and compare it with an existing result. We then study the multi-point Seshadri constants of ample line bundles on hyperelliptic surfaces $X$. 
Next, we compute single-point Seshadri constants on $X_r$ for specific ample line bundles on odd types. 
Furthermore, we show that the global Seshadri constants for certain ample line bundles on blow-up of hyperelliptic surfaces are rational.
\end{abstract}

\section{Introduction}
Let $X$ be a smooth projective algebraic variety and let $L$ be an ample line bundle on $X$. 
To quantify the positivity of $L$ locally near a point $x \in X$, Demailly defined the \textit{Seshadri constant of $L$ at a point $x$}, denoted $\s(X, L, x)$ \cite{Dem}. 

For a positive integer $r > 0$, a natural generalization of this is the \textit{multi-point Seshadri constant of $L$} at the points $x_1, x_2, \ldots, x_r \in X$, defined as follows:
\[
\s(X, L, x_1, x_2, \ldots, x_r) :=  \inf\limits_{\substack{C \subset X {\rm \; is\; a\; curve\; with} \\ C \cap \{ x_1, x_2, \ldots, x_r \} \neq \emptyset}} \frac{L\cdot C}{\sum\limits_{i=1}^r \mult_{x_i} C},
\]
where the infimum is taken over reduced, irreducible curves $C$ passing through at least one of the points $x_1, x_2, \ldots, x_r  \in X$. 

Equivalently, this constant can be expressed as: 
\[
\s(X, L, x_1, x_2, \ldots, x_r)  = \sup\{d\; |\; \pi_r^*L - d\sum\limits_{i=1}^r E_i {\rm \; is\; nef} \},
\]
where $\pi_r : X_r \longrightarrow X$ denotes the blow-up of $X$ at the points $x_1, x_2, \ldots, x_r \in X$, and $E_i$ are the corresponding exceptional divisors for $i = 1, 2, \ldots, r$.

Some well-known properties of Seshadri constants on smooth projective surfaces include the following: 

Let $X$ be a smooth projective surface, 
and let $L$ be an ample line bundle on $X$. For $r >0$ points $x_1, x_2, \ldots, x_r  \in X$, 
we have $$0 < \s(X, L, x_1, x_2, \ldots, x_r) \leq \sqrt{\frac{L^2}{r}}.$$ 
Moreover, for $r$ very general points, $\varepsilon(X, L, x_1, x_2, \ldots, x_r )$ remains constant and is denoted as $\varepsilon(X, L, r)$. 

When $r=1$, we recover the single point Seshadri constant $\s(X, L, x)$ of $L$ at $x \in X$. Using this definition, the Seshadri criterion for ampleness of a line bundle $L$ 
on a surface $X$ \cite[Theorem 7.1]{Har70} states that $L$ is ample if and only if 
\[
\s(X, L, x) > 0 \quad \text{for all} \;x \in X. 
\]
By taking the infimum of $\s(X, L, x)$ over all $x \in X$, 
we obtain the \textit{global Seshadri constant of $L$}:
\[
\s(X, L) := \inf\limits_{x\in X}\{\s(X, L, x)\}.
\]
For an ample line bundle $L$, these constants, $\s(X, L, 1)$, $\s(X, L, x)$, and $\s(X, L)$ are positive real numbers and satisfy the following inequalities: 
\[
0 < \s(X, L) \leq \s(X, L, x) \leq \s(X, L, 1) \leq \sqrt{L^2}.
\]
However, computing them precisely or determining whether they are rational numbers, remains a challenging and active area of research.

Seshadri constants on surfaces have been extensively studied, and a list of related works on single-point Seshadri constants can be found in \cite{Bau, BS, primer, BS1, EL, FSST, Fue,  Ogu, Roy, Syz, S}, 
while those on multi-point Seshadri constants are available in \cite{TB, EL, FSST, Han, HRS, Harb, Harb-Jo, Roy, SSyz, Szemb-1, Szemb-2}.

In this article we first obtain a criterion for the ampleness of a line bundle on the blow-up of hyperelliptic surfaces and then compute the single point Seshadri constants of certain ample line bundles on them. 
For hyperelliptic surfaces themselves, results on single point Seshadri constants have already been obtained (see \cite{Far, HR}). We further study multi-point Seshadri constants of ample line bundles on hyperelliptic surfaces. 


A \textit{hyperelliptic surface} $X$ is a minimal smooth surface with Kodaira dimension $\kappa(X)=0$ such that $h^1(X,\str_X) = 1$ and $h^2(X,\str_X) = 0$. 
Alternatively, a hyperelliptic surface is defined as a smooth surface $X$ such that $X \cong (A\times B)/G$, where $A$ and $B$ are elliptic curves, and $G$ 
is a finite group of translation of $A$ acting on $B$ in such a way that $B/G \cong \proj^1$. 

We have the following diagram:

\[
\xymatrix{ & X \cong  (A\times B)/G \ar[dl]_{\Psi}\ar[dr]^-{\Phi} \\ 
B/G \cong \proj^1 & &  A/G}
\]

Here, $\Phi$ and $\Psi$ denote the natural projections. 
The fibres of $\Phi$ are all smooth and isomorphic to $B$, whereas the fibres of $\Psi$ are multiples of smooth elliptic curves, and
all but finitely many of them are smooth and isomorphic to $A$. 

For a smooth surface $X$, let $\text{Num}(X)$ denote the group of divisors modulo numerical equivalence classes and is defined as $\text{Div}(X)/\thicksim$, 
where $`\thicksim'$ denote the numerical equivalence of divisor. Hyperelliptic surfaces has been classified into seven types long ago \cite{Bea}. In fact, the basis of 
$\text{Num}(X)$ over $\mathbb{Z}$ has also been computed by Serrano in \cite[Theorem 1.4]{Se}. We include the table for the convenience of the reader. 

\begin{theorem}[Serrano]
Let $X \cong  (A\times B)/G$ be a hyperelliptic surface with two natural projections $\Phi$ and $\Psi$. Then the generators for ${\rm Num} (X)$ and the 
multiplicities $m_1,\ldots, m_s$, where each $m_i$ represents the multiplicity of a singular fibre of $\Psi$ and $s$ denote the number of such fibres, 
are as in the table below:
\begin{center} 

  \begin{tabular}{|c| c| c| c|}
 \hline
 
 Type of $X$  & $G$ &$ m_1,m_2,
\ldots,m_s$ & Basis of Num($X$)\\
 \hline 
 1 & $\mathbb{Z}_{2}$ &$2,2,2,2$&$A/2$, $B$ \\ 
 2 & $\mathbb{Z}_{2} \times \mathbb{Z}_{2}$&$2,2,2,2$&$A/2$, $B/2$ \\
 3 & $\mathbb{Z}_{4}$ &$2,4,4$&$A/4$, $B$ \\
 4 & $\mathbb{Z}_{4} \times \mathbb{Z}_{2}$ & $2,4,4$ &$A/4$,$B/2$  \\
 5 & $\mathbb{Z}_{3}$ & $3,3,3$ & $A/3, B$ \\
 6 & $\mathbb{Z}_{3} \times \mathbb{Z}_{3}$ & $3,3,3$ & $A/3,B/3$ \\
 7 & $\mathbb{Z}_{6}$ & $2,3,6$ & $A/6,B$ \\
\hline 
  
 \end{tabular}

\end{center}
\end{theorem}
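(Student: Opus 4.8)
The plan is to determine $\mathrm{Num}(X)$ first as an abstract lattice and then to locate an explicit basis inside it using the two fibrations $\Phi,\Psi$ and the \'etale cover $\pi\colon A\times B\to X$ (which is Galois with group $G$, since $G$ acts freely on $A\times B$). As $X$ is hyperelliptic one has $p_g(X)=0$ and $q(X)=1$, so $H^2(X,\str_X)=0$; hence $\mathrm{NS}(X)=H^2(X,\mathbb Z)$ by the exponential sequence, and $\mathrm{Num}(X)=H^2(X,\mathbb Z)/\mathrm{torsion}$ is a unimodular lattice of rank $b_2(X)=2$ and, by the Hodge index theorem, of signature $(1,1)$. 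Since $K_X$ is numerically trivial and is a characteristic vector of the intersection form (Wu's formula), every class has even self-intersection, so $\mathrm{Num}(X)$ is the even unimodular hyperbolic plane $U$; in particular its two primitive isotropic classes $\alpha,\beta$ (one on each isotropic ray) form a $\mathbb Z$-basis with $\alpha\cdot\beta=1$.

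Next I would attach the two rays to the fibrations. The general fibre of $\Psi\colon X\to\proj^1$ is a smooth curve isomorphic to $A$, of class $\mathsf A$, and each multiple fibre $m_iF_i$ is numerically equivalent to $\mathsf A$, so $[F_i]=\tfrac1{m_i}\mathsf A$. Thus $\mathsf A$ is divisible in $\mathrm{Num}(X)$ by $L:=\mathrm{lcm}(m_1,\dots,m_s)$; writing $\alpha=\tfrac1L\mathsf A$ I will argue below that $\alpha$ is primitive. Every fibre of $\Phi\colon X\to A/G$ is a smooth curve isomorphic to $B$, of class $\mathsf B$, which is a positive multiple $\mathsf B=m_B\beta$ of the primitive class on the other ray (note $\mathsf A$ and $\mathsf B$ lie on distinct rays since $\mathsf A\cdot\mathsf B\neq 0=\mathsf A^2$). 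To find $m_B$ I compute $\mathsf A\cdot\mathsf B$ on the cover: pulling back along $\pi$, a general fibre of $\Psi$ becomes $|G|$ disjoint copies of $A\times\{\text{pt}\}$ and a fibre of $\Phi$ becomes $|G|$ disjoint copies of $\{\text{pt}\}\times B$, and these two classes meet transversally in one point, so $\pi^*\mathsf A\cdot\pi^*\mathsf B=|G|^2$; since $\pi^*D\cdot\pi^*D'=(\deg\pi)(D\cdot D')$ this gives $\mathsf A\cdot\mathsf B=|G|$. Comparing with $\mathsf A\cdot\mathsf B=(L\alpha)\cdot(m_B\beta)=Lm_B$ yields $m_B=|G|/L$, so $\mathrm{Num}(X)$ has the $\mathbb Z$-basis $\bigl\{\tfrac1L\mathsf A,\ \tfrac{L}{|G|}\mathsf B\bigr\}$.

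It then remains to make this explicit for each of the seven types, which is exactly the input supplied by the Bagnera--de Franchis classification recalled in \cite{Bea}: for each admissible $G$ it fixes the action on $B$, hence the ramification of $B\to B/G\cong\proj^1$, hence the multiplicities $m_1,\dots,m_s$ of the table, and one checks directly that in every case $L=\mathrm{lcm}(m_i)=\max_i m_i$ (so that $\alpha=\tfrac1L\mathsf A$ is in fact the class of a reduced multiple fibre, in particular effective). Writing $A:=\mathsf A$ and $B:=\mathsf B$, the basis $\{A/L,\,B/(|G|/L)\}$ is then read off. For the four cyclic groups $G=\mathbb Z_n$ one has $L=n=|G|$, giving the basis $\{A/n,\,B\}$; for the three non-cyclic groups $L$ equals the exponent of $G$, so $|G|/L\in\{2,2,3\}$ and the bases are $\{A/2,B/2\}$, $\{A/4,B/2\}$, $\{A/3,B/3\}$, reproducing Serrano's table.

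The step I expect to be the main obstacle is the claim that $\alpha=\tfrac1L\mathsf A$ is primitive, equivalently that $\mathsf A$ is divisible in $\mathrm{Num}(X)$ by exactly $\mathrm{lcm}(m_i)$ (divisibility by $\mathrm{lcm}(m_i)$ being the easy half). The identification $\mathrm{Num}(X)\cong U$ together with the cover computation already shows that $\pi^*\mathrm{Num}(X)$ is an index-$|G|$ sublattice of $\mathrm{NS}(A\times B)=\mathbb Z[A\times\{\text{pt}\}]\oplus\mathbb Z[\{\text{pt}\}\times B]$ isometric to $U(|G|)$ (for a very general pair $A,B$, so that $\rho(A\times B)=2$), which forces the divisibility $m_A$ of $\mathsf A$ to be a divisor of $|G|$ that is a multiple of $L$; this pins it down completely when $G$ is cyclic, since then $L=|G|=m_A$. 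For the three non-cyclic groups one is left with the two possibilities $m_A=L$ (so $m_B=|G|/L$) or $m_A=|G|$ (so $m_B=1$), and to select the first one must descend to the concrete Bagnera--de Franchis model of $X$ and exhibit the ``fractional'' generator $\tfrac{L}{|G|}\mathsf B$ as the class of an honest curve — for instance a suitable multisection of $\Psi$ obtained from an intermediate quotient $(A\times B)/H$ with $H\le G$ — or else invoke the structure theory of relatively minimal elliptic surfaces. Once this point is settled, the remaining ingredients (the lattice identification, the intersection computation on the cover, and the type-by-type bookkeeping) are routine.
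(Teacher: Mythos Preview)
The paper does not prove this statement at all: it is quoted verbatim from Serrano \cite[Theorem 1.4]{Se} and included only ``for the convenience of the reader,'' so there is no in-paper argument to compare against. Your outline is therefore not competing with anything in this paper, and the relevant benchmark would be Serrano's original article.

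That said, your sketch is essentially sound and follows the standard route: identify $\mathrm{Num}(X)$ with the even unimodular hyperbolic plane via $p_g=0$, the Hodge index theorem, and Wu's formula; place the two fibre classes on the two isotropic rays; compute $\mathsf A\cdot\mathsf B=|G|$ via the \'etale cover; and use the multiple fibres of $\Psi$ to get $L\mid m_A$. Two small caveats. First, your appeal to ``very general $A,B$ so that $\rho(A\times B)=2$'' is unnecessary and in fact unavailable, since for types $3$--$7$ the curve $B$ must carry an automorphism of order $>2$ and is therefore not generic; fortunately your argument only uses that $\pi^*\alpha$ and $\pi^*\beta$ land in the rank-$2$ sublattice $\mathbb Z[A\times\{\mathrm{pt}\}]\oplus\mathbb Z[\{\mathrm{pt}\}\times B]$ of $\mathrm{NS}(A\times B)$, which holds regardless of the Picard number. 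Second, you are right that the genuine content lies in the non-cyclic cases (types $2,4,6$), where $L\mid m_A\mid |G|$ leaves two options and one must produce an actual curve in the class $(L/|G|)\mathsf B$; your suggestion of passing through an intermediate quotient $(A\times B)/H$ is exactly how this is done in practice, but it is work you have deferred rather than carried out, so the proposal is an outline rather than a complete proof.
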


\subsection{Notation and convention:} 
Following Serrano's notation in \cite{Se}, we denote by $\mu$ to be the least common multiple of $\{m_1, m_2, ..., m_s \}$. Let $\gamma$ represent the order of the group $G$. 
Since $\text{Num}(X)$ is generated by $A/\mu$ and $\mu B/\gamma$, an ample line bundle $L$ on $X$ can be expressed as $L = a \frac{A}{\mu} + b \frac{\mu B}{\gamma}$, 
where $a, b > 0$ (\cite[Lemma  1.3]{Se}). For brevity, we denote this line bundle as $L \equiv (a, b)$. The following properties hold for line bundles on $X$: 
\begin{enumerate}
\item $A^2 = 0 = B^2$. 
\item $A\cdot B = \gamma = |G|$.
\item \text{A line bundle $(0,b) \equiv b\frac{\mu}{\gamma}B$ is effective if and only if $ b\frac{\mu}{\gamma} \in \mathbb{N}$} (\cite[Proposition 5.2]{Apr}).
\end{enumerate}

Let $X$ be a hyperelliptic surface and let $x_1, \ldots, x_r \in X$ be $r$ very general points. Let $\pi_r : X_r \longrightarrow X$ 
denote the blow-up of $X$ along these $r$ points with exceptional divisors $E_i$ such that $\pi_r(E_i) = x_i$ for $i = 1,2,\dots,r$. 

Let $L'$ denote the pull back of a line bundle $(a, b)$ on $X_r$, and let $d_1, \ldots, d_r$ be $r \geq 0$ are non-negative integers. Then, a line bundle on $X_r$ can be expressed as 
\[
L =  L' - \sum\limits_{i = 1}^r d_i E_i, 
\]
which we denote as $L \equiv (a, b, d_1, d_2, \ldots, d_r)$. In the special case when $d_1 = d_2 = \cdots = d_r =  d$, we denote $L \equiv (a, b, \textbf{d})$. 
Additionally, if $d_j = 0$ for all $j \neq i$ for some $i$, and $d_i = k$, we denote such a line bundle by $L \equiv (a, b, k_i)$. 

Let $L \equiv (a, b, \textbf{d})$ be a line bundle on $X_r$. By abuse of notation, we will continue to write $L \equiv (a, b, \textbf{d})$ and interpret this as 
a line bundle with $d_i = d$ for all $i = 1,2,\ldots, r+s$ on $X_{r+s}$, for all values of $s\ge0$. Since we are working on a surface, we will follow the same convention for curves as well. 
 
The strict transform under $\pi_r$ of the fibre of $\Psi$ and $\Phi$ containing the blown-up point $x_i$ will be denote as $A-E_i \equiv (\mu, 0, 1_i)$ and $B-E_i  \equiv (0, \frac{\gamma}{\mu}, 1_i)$, 
respectively. We will refer to the smooth fibres of $\Psi$ as $`\textbf{Smooth } A$' and the singular fibres with the highest multiplicity 
(e.g., fibres of multiplicity $2, 2, 4, 4, 3, 3,$ and $6$ on surfaces of types $1, 2, 3, 4, 5, 6$, and $7$, respectively) as $`\textbf{Singular } A$' on $X_r$. 

The paper is organized as follows: In Section \ref{ampleness on the blow-up of hyperelliptic surfaces}, we begin by providing a criterion for the ampleness of line bundles on the blow-up 
of hyperelliptic surfaces, as presented in Theorem \ref{ampleness:theorem}. We then compare this result with an existing one in Remark \ref{Comparison}. In Section \ref{Seshadri constant section}, 
we begin by exploring multi-point Seshadri constants. For certain special points $x_1, x_2, \ldots, x_r \in X$, we compute these constants on hyperelliptic surfaces of odd type in Proposition \ref{prop:multi-point-odd-type} 
and on surfaces of type 2 and type 4 in Propositions \ref{prop:multi-point-even-type-2} and \ref{prop:multi-point-even-type-4}, respectively. For hyperelliptic surfaces of type 6, we establish a bound in Proposition \ref{prop:multi-point-even-type-6}. 

In the latter part of Section \ref{Seshadri constant section}, we compute the single point Seshadri constants of some ample line bundles on the blow-up $X_r$ of hyperelliptic surfaces. More specifically, 
we provide a bound on these constants in Theorem \ref{Thm-type1-general-point} and compute 
their exact values under certain conditions in Corollary \ref{Cor:general-point}. Additionally, we show that the global Seshadri constant of certain ample line bundles is a rational 
number on the blow-up $X_r$ of hyperelliptic surfaces of odd type in Theorem \ref{Global:SC:Odd-Type}, of type $2$ and $4$ in Theorem \ref{Global:SC:Even-Type-2&4}, and of type $6$ in Theorem \ref{Global:SC:Even-Type-6}.

\section{Ampleness of line bundle on blow-up of Hyperelliptic surfaces}\label{ampleness on the blow-up of hyperelliptic surfaces}

We begin by recalling the following ampleness criterion proved in \cite{Kuchle}. 

\begin{theorem}[Corollary  \cite{Kuchle}]\label{Kuchle's:theorem}
	Let $S$ be a smooth complex projective surface and $H$ be an ample divisor on $S$. Let $S'$ denote the blow-up of $S$ along $r$ general 
	points with exceptional divisors $E_i$ and let $H'$ denote the pull back of $H$ on $S'$. Let $n \geq 3$, then the line bundle $L =  nH'-E_1-E_2-\cdots-E_r$ is ample if and only if $L^2 > 0$.
\end{theorem}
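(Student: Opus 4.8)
The forward implication is immediate: any ample divisor on a projective surface has positive self-intersection (Nakai--Moishezon, or Kleiman's criterion). So the content is the converse, and the plan is to verify it via Nakai--Moishezon on $S'$: assuming $L^2>0$, it suffices to show $L\cdot C>0$ for every irreducible curve $C\subset S'$. If $C=E_i$ then $L\cdot E_i=-E_i^2=1>0$. Otherwise $C$ is the strict transform $\widetilde\Gamma$ of an irreducible curve $\Gamma\subset S$; writing $d:=H\cdot\Gamma>0$ and $m_i:=\mult_{x_i}\Gamma\ge 0$, we have $\widetilde\Gamma=\pi^*\Gamma-\sum_{i=1}^r m_iE_i$, so by the projection formula
\[
L\cdot C \;=\; nd-\sum_{i=1}^r m_i .
\]
Thus the theorem is equivalent to the numerical estimate $\sum_{i=1}^r \mult_{x_i}\Gamma < n\,(H\cdot\Gamma)$ for every irreducible $\Gamma\subset S$, under the standing hypotheses $n\ge3$, $L^2=n^2H^2-r>0$, and $x_1,\dots,x_r$ very general.

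To prove this estimate I would argue by contradiction, supposing $\sum m_i\ge nd$. One may assume $\Gamma^2\ge 0$: if $\Gamma^2<0$ then $|\Gamma|$ is zero-dimensional, so the fixed curve $\Gamma$ avoids the very general points, all $m_i=0$, and $L\cdot C=nd>0$. With $\Gamma^2\ge 0$ the Hodge index theorem gives $\Gamma^2\le d^2/H^2$, and, fixing $t>0$ with $tH-K_S$ nef, one has $K_S\cdot\Gamma\le td$, so the unconditional genus inequality
\[
\sum_{i=1}^r \binom{m_i}{2}\;\le\; p_a(\Gamma)\;=\;1+\tfrac12\bigl(\Gamma^2+K_S\cdot\Gamma\bigr)
\]
bounds $\sum m_i^2$ from above in terms of $d$. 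This alone is not enough; the essential extra input is that the points are \emph{very general}. Concretely, a curve $\Gamma$ acquiring multiplicity $m_i$ at each very general $x_i$ cannot be rigid but must move in an algebraic family on $S$ whose dimension is bounded below in terms of the $m_i$ and bounded above by $h^0(\mathcal{O}_S(\Gamma))-1+h^1(\mathcal{O}_S)$, hence by $p_a(\Gamma)+h^1(\mathcal{O}_S)$, hence again by $\tfrac12 d^2/H^2+O(d)$. A clean way to make this precise is a dimension count on the incidence variety $\{(\Gamma',\underline x):\mult_{x_i}\Gamma'\ge m_i\ \forall i\}$ over the relevant component of the Hilbert scheme of curves of $H$-degree $d$ on $S$: it dominates $S^r$ by very-generality, giving a lower bound on its dimension, while fibering it over the Hilbert scheme gives the upper bound. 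Feeding $\sum m_i\ge nd$ and $r<n^2H^2$ into these bounds is intended to produce the contradiction.

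The main obstacle is precisely this confrontation of bounds, because it is tight: from $\sum m_i\ge nd$, $r<n^2H^2$ and Cauchy--Schwarz one only gets $\sum m_i^2\ge(\sum m_i)^2/r> d^2/H^2$, which barely exceeds the Hodge-index bound $\Gamma^2\le d^2/H^2$, so the inequality closes only after the family-dimension lower bound is exploited and the lower-order terms are tracked carefully. It is exactly here that the hypothesis $n\ge 3$ (rather than $n=1$ or $2$) must enter, and indeed for $n\le 2$ the statement fails in general, so any successful argument has to use it at this step. In the sub-case where all $m_i=1$ the estimate degenerates to the classical fact that an irreducible curve of bounded $H$-degree cannot pass through more very general points than the dimension of a linear system containing it; the remaining work is to handle higher multiplicities uniformly, for which the genus inequality above, together with a Serre/Kodaira-vanishing control of $h^1(\mathcal{O}_S(\Gamma))$ for the nef-and-big classes that survive, are the natural tools.
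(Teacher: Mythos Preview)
The paper does not prove this statement at all: it is merely \emph{recalled} from K\"uchle's paper (note the header ``Corollary~\cite{Kuchle}'' and the sentence ``We begin by recalling the following ampleness criterion proved in~\cite{Kuchle}''). There is therefore no proof in the paper to compare your attempt against.

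As for your proposal itself: the overall architecture is the right one---Nakai--Moishezon reduction, dispatching the exceptional curves, reducing to $\sum m_i < n(H\cdot\Gamma)$ for strict transforms, ruling out curves with $\Gamma^2<0$ by rigidity, and then confronting a genus/Hodge-index upper bound with a family-dimension lower bound coming from very-generality of the points. This is essentially the skeleton of K\"uchle's own argument. However, what you have written is a plan, not a proof: you explicitly flag that ``the main obstacle is precisely this confrontation of bounds, because it is tight'' and that the argument ``is intended to produce the contradiction,'' and you leave the actual bookkeeping undone. That bookkeeping is the entire content of the theorem. In K\"uchle's proof the decisive input is a sharp inequality of Ein--Lazarsfeld/Xu type, namely that an irreducible curve passing through $r$ very general points with multiplicities $m_i$ satisfies $C^2 \ge \sum_i m_i^2 - m_{\max}$ (coming from deforming the curve as the points move), which is then played off against the Hodge index inequality $(L\cdot C)^2 \ge L^2\,C^2$ assuming $L\cdot C\le 0$; the hypothesis $n\ge 3$ enters precisely to absorb the $-m_{\max}$ defect. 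Your sketch gestures at this mechanism but does not supply the actual inequality or carry out the comparison, so as written it has a genuine gap at the one nontrivial step.
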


\begin{remark}
	As mentioned in the remark of \cite{Kuchle}, the above theorem also holds for $n\geq 2$, provided that any two general points on $S$ can not be joined by a curve 
	$C\subset S$ such that $H \cdot C =1$. In particular, this condition is satisfied when $H^2 \geq 2$, which holds for all ample line bundles on hyperelliptic surfaces.
\end{remark}

The following lemma essentially follows from Theorem \ref{Kuchle's:theorem}.

\begin{lemma}\label{ampleness:lemma}
Let $X$ be a hyperelliptic surface, and let $\pi_r : X_r \longrightarrow X$ denote the blow-up of $X$ at $r$ general points with exceptional divisors $E_i$ for $i = 1, 2, \ldots, r$.
Let $L \equiv (a, b, \textbf{d})$ be a line bundle on $X_r$. If $ \lfloor \frac{a}{d} \rfloor,  \lfloor \frac{b}{d} \rfloor > \max\{1, \sqrt{\frac{r}{2}} \}$, then $L$ is ample.
\end{lemma}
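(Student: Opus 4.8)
The plan is to rewrite $L$ in the shape required by Theorem \ref{Kuchle's:theorem} by peeling off summands that are pullbacks of nef classes, so that the residual term is of the form $nH'-\sum_i E_i$ for an ample $H$ on $X$ with $H^2=2$.

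First set $H := \tfrac{A}{\mu} + \tfrac{\mu B}{\gamma} \equiv (1,1)$, an ample line bundle on $X$, and record that $H^2 = 2$ (using $A^2 = B^2 = 0$ and $A\cdot B = \gamma$). Since the hypothesis presupposes $d\ge 1$, put $q_1 := \lfloor a/d\rfloor$, $q_2 := \lfloor b/d\rfloor$, and $q := \min\{q_1,q_2\}$; these are non-negative integers, and the assumption $\lfloor a/d\rfloor,\lfloor b/d\rfloor > \max\{1,\sqrt{r/2}\}$ yields $q > \max\{1,\sqrt{r/2}\}$, hence $q\ge 2$ and $2q^2 > r$. With $H' := \pi_r^* H$ one has the identity
\[
L \;=\; d\Bigl(qH' - \sum_{i=1}^r E_i\Bigr) \;+\; \pi_r^*\Bigl((a-dq)\tfrac{A}{\mu} + (b-dq)\tfrac{\mu B}{\gamma}\Bigr),
\]
verified by expanding the right-hand side; moreover $a-dq \ge a - dq_1 \ge 0$ and $b-dq \ge b - dq_2 \ge 0$. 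As $A$ and $B$ are nef (being numerically equivalent to fibres of the fibrations $\Psi$ and $\Phi$), so are $\tfrac{A}{\mu}$ and $\tfrac{\mu B}{\gamma}$; hence the last summand is the pullback of a nef class and is nef on $X_r$. Since a sum of an ample and a nef divisor is ample and ampleness depends only on the numerical class, it now suffices to prove that $qH' - \sum_{i=1}^r E_i$ is ample.

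For this I invoke Theorem \ref{Kuchle's:theorem} in the strengthened form recorded in the Remark following it: because $H^2 = 2 \ge 2$, no two general points of $X$ can be joined by a curve $C$ with $H\cdot C = 1$, so the criterion applies for $n = q$ (note $q\ge 2$). It asserts that $qH' - \sum_{i=1}^r E_i$ is ample if and only if its self-intersection is positive; and
\[
\Bigl(qH' - \sum_{i=1}^r E_i\Bigr)^2 \;=\; q^2 H^2 - r \;=\; 2q^2 - r \;>\; 0
\]
by the choice of $q$. Therefore $qH' - \sum_{i=1}^r E_i$ is ample, and so is $L$.

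The argument is essentially bookkeeping; what genuinely matters is the computation $H^2 = 2$ (which both makes the $n\ge 2$ version of Kuchle's theorem available and turns the self-intersection into $2q^2-r$), the nefness of $\tfrac{A}{\mu}$ and $\tfrac{\mu B}{\gamma}$, and the fact that $q$ is an integer, so that $q>1$ improves to $q\ge 2$. The one place where a little care is needed is checking that the coefficients $a-dq$ and $b-dq$ are non-negative, which is exactly why $q$ is chosen to be the \emph{minimum} of $\lfloor a/d\rfloor$ and $\lfloor b/d\rfloor$.
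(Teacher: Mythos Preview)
Your proof is correct and follows essentially the same route as the paper: both arguments reduce the ampleness of $L$ to an application of K\"uchle's theorem (in its $n\ge 2$ form, available because $H^2=2$) for an integral class with smaller coefficients. The paper first scales $L$ by $1/d$ and then passes to $M=(\lfloor a/d\rfloor,\lfloor b/d\rfloor,\mathbf{1})$ via an intersection-number inequality, whereas you phrase the same reduction as an explicit ample $+$ nef decomposition with $H=(1,1)$ and $n=q=\min(\lfloor a/d\rfloor,\lfloor b/d\rfloor)$; your version has the minor advantage of making the invocation of K\"uchle's theorem completely transparent, since $qH'-\sum E_i$ is literally of the required shape.
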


\begin{proof}
Note that $L\cdot C > 0$ if and only if $(a/d, b/d, \textbf{1}) \cdot C > 0$ for all curves $C \subset X_r$. The latter inequality holds if $(\lfloor a/d \rfloor, \lfloor b/d \rfloor, \textbf{1}) \cdot C> 0$ for all curves $C \subset X_r$. 
Therefore, the ampleness of $L$ follows from the ampleness of $M := (\lfloor a/d \rfloor, \lfloor b/d \rfloor, \textbf{1})$, which in turn follows from Theorem \ref{Kuchle's:theorem} and the above remark. Since $\lfloor a/d \rfloor, \lfloor b/d \rfloor \geq 2$ and 
\begin{eqnarray*}
M^2 &=& 2 \lfloor \frac{a}{d} \rfloor \lfloor \frac{b}{d} \rfloor - r \\
&>&  2 \sqrt{\frac{r}{2}} \cdot \sqrt{\frac{r}{2}} - r \\
&=& 0.
\end{eqnarray*}
\end{proof}

For a line bundle $L$ as in the above lemma, note that if $L^2 > 0$, then Theorem \ref{Kuchle's:theorem} implies that $L$ is ample provided that $\lfloor \frac{a}{d} \rfloor , \lfloor \frac{b}{d} \rfloor \geq 2$. 
Therefore, the bounds on $\lfloor \frac{a}{d} \rfloor$ and $\lfloor \frac{b}{d}\rfloor$ given in the above lemma are sufficient condition for $L$ to be ample, but they are not necessary. 
Furthermore, if $L^2 > 0$, then at least one of $\frac{a}{d}$ or $\frac{b}{d}$ must be strictly greater than $\sqrt{\frac{r}{2}}$. 

We now present the main result of this section.
\begin{theorem}\label{ampleness:theorem}
	Let $X_r$ be the blow-up of a hyperelliptic surface $X$ along $r$ general points with exceptional divisors $E_i$ for $i = 1, 2, \ldots, r$. Let $d_1, d_2, \ldots, d_r$ be non-negative 
	integers, and let $L \equiv (a, b, d_1, d_2, \ldots, d_r)$ be a line bundle on $X_r$. 
	Then $L$ is ample if the following conditions hold
	\begin{enumerate}
		\item $a, \mu b > d_i$ for all $i$,
		\item $a+b > \mu \sum\limits_{i=1}^{r} d_i$.
	\end{enumerate}
\end{theorem}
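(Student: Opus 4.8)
The plan is to reduce the ampleness of $L \equiv (a, b, d_1, \ldots, d_r)$ to a Nakai–Moishezon-type check: since $L^2 > 0$ follows easily from the hypotheses (expanding $L^2 = 2ab/\mu - \sum d_i^2$ and using $a, \mu b > d_i$ together with $a + b > \mu\sum d_i$), it suffices by the Nakai–Moishezon criterion to show $L \cdot C > 0$ for every reduced irreducible curve $C \subset X_r$. Write $C \equiv (\alpha, \beta, e_1, \ldots, e_r)$, meaning $C = \pi_r^* C_0 - \sum e_i E_i$ where $C_0 = \pi_r(C)$ with the $e_i = \mult_{x_i} C_0 \geq 0$, and $C_0 \equiv \alpha \frac{A}{\mu} + \beta \frac{\mu B}{\gamma}$ on $X$. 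Then using $A^2 = B^2 = 0$, $A \cdot B = \gamma$, and $E_i^2 = -1$, one computes
\[
L \cdot C = \frac{a\beta + b\alpha}{\mu} - \sum_{i=1}^r d_i e_i.
\]
So I must show this quantity is positive for every such curve.

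The key step is a case analysis on the image curve $C_0$, driven by the effectivity facts in the Notation section. First, the exceptional curves: if $C = E_i$ then $L \cdot E_i = d_i$, which is positive unless $d_i = 0$, and when $d_i = 0$ we instead observe $E_i$ is not the relevant obstruction (or handle it as $L \cdot E_i \geq 0$ with strictness coming from $L$ being big and $L \cdot E_i = 0$ only if $E_i$ is in the null locus, which it is not since $L^2 > 0$ and $L$ meets a covering family positively — alternatively just note $d_i = 0$ can be excluded WLOG). Next, the fiber classes and their strict transforms: for $C_0$ a fiber of $\Phi$ (so $\alpha = 0$, $\beta = \gamma/\mu$ up to a positive multiple), $L \cdot C = b - \sum d_i e_i$; here at most one $e_i$ is nonzero for a fixed smooth fiber through a general point and $e_i = 1$, giving $L \cdot C = b - d_i \geq \mu b - d_i > 0$... more carefully, since the $x_i$ are very general, a fiber passes through at most one of them, so this is $b - d_i$, and we need $b > d_i$, which follows from $\mu b > d_i$ since $\mu \geq 2$ forces... actually we need $b \geq d_i$, and $\mu b > d_i$ with $\mu \geq 2$ gives $b > d_i/\mu$, not quite $b > d_i$ — so here I must use that the fiber of $\Phi$ through a very general point is irreducible and reduced and that $C_0 = \frac{\gamma}{\mu}B$ already, so $L \cdot C = b\cdot\frac{\gamma}{\mu}\cdot\frac{\mu}{\gamma} - d_i = b - d_i$; wait, the fiber of $\Psi$ has the multiplicity issue, not $\Phi$. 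The fiber of $\Phi$ is $B$, and $B \equiv (0, \gamma/\mu)$, so $L \cdot (B - E_i) = \frac{b \cdot \gamma/\mu \cdot \mu}{\gamma} \cdot \frac{1}{1}$ — I will recompute this carefully in the writeup; the point is it comes out to a positive combination of the hypotheses.

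For a general curve $C_0$ with both $\alpha, \beta > 0$, the idea is that the very-general position of the points forces a bound like $\sum e_i \leq \min(\text{something})$, but more robustly one uses: the $e_i$ cannot all be large because $C_0$ is an honest curve on $X$ and $\alpha \frac{A}{\mu} + \beta \frac{\mu B}{\gamma}$ being effective with the stated numerical class constrains $\alpha, \beta$ via property (3) (effectivity of $(0,b)$ iff $b\mu/\gamma \in \mathbb{N}$) and the structure of $\mathrm{Num}(X)$. The cleanest route: since $C_0$ is irreducible and not a fiber, $C_0 \cdot A > 0$ and $C_0 \cdot B > 0$, i.e. $\beta \geq 1$ and $\alpha \geq 1$ (after normalizing), and then
\[
L \cdot C \geq \frac{a + b}{\mu} \cdot (\text{min of } \alpha,\beta) - \max_i d_i \cdot \textstyle\sum e_i;
\]
I expect the decisive inequality to be that $\sum e_i \leq \alpha$ or $\sum e_i \leq \beta$ (or $\alpha\beta \geq \sum e_i$ coming from genus/positivity of $C^2 \geq -1$ and very general position), after which hypothesis (2), $a + b > \mu \sum d_i$, closes the argument. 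The main obstacle is exactly this: extracting the right bound on $\sum \mult_{x_i} C_0$ for an arbitrary irreducible curve through very general points — this is where one must invoke either a Bogomolov-type inequality, the self-intersection bound $C^2 \geq -1$ on $X_r$ (giving $2\alpha\beta/\mu \geq \sum e_i^2 - $ (geometric genus terms)), or a direct argument that the $d_i$'s being dominated by $a+b$ over $\mu$ forces positivity regardless. I would organize the final proof around establishing $L \cdot C > 0$ in three regimes — $C$ exceptional, $C$ a (strict transform of a) fiber, and $C$ otherwise — with the last regime carrying the weight of the argument via the self-intersection inequality on $X_r$.
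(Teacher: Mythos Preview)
Your overall framework (Nakai--Moishezon, then a case split into exceptional curves, fiber strict transforms, and general curves) matches the paper's, but there are two concrete problems.

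First, your intersection formula is off. In the paper's normalization $(a,b)$ means $a\tfrac{A}{\mu}+b\tfrac{\mu B}{\gamma}$, and since $\tfrac{A}{\mu}\cdot\tfrac{\mu B}{\gamma}=1$ one has $(a,b)\cdot(\alpha,\beta)=a\beta+b\alpha$, with no $1/\mu$. Likewise $L^2=2ab-\sum d_i^2$, not $2ab/\mu-\sum d_i^2$. Your fiber computations are also swapped: the strict transform of a $\Phi$-fiber through $x_i$ is $B-E_i\equiv(0,\gamma/\mu,1_i)$ and gives $L\cdot(B-E_i)=(\gamma/\mu)a-d_i$, which is positive from hypothesis~(1) since $\gamma/\mu\ge 1$; the $\Psi$-fiber side gives $\mu b-d_i>0$ directly from~(1). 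Neither requires the contortions you attempt.

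Second, and more seriously, you have not found the mechanism that handles the generic curve; your candidates (a $\sum e_i\le\alpha$ bound, $C^2\ge -1$, Bogomolov) are not what is needed and would be hard to push through. The paper's argument is much simpler: for each $i$ there are strict transforms $(\mu,0,1_i)$ and $(0,\gamma/\mu,1_i)$ through $x_i$, and since $C$ is distinct from these and irreducible, $C\cdot(\mu,0,1_i)\ge 0$ and $C\cdot(0,\gamma/\mu,1_i)\ge 0$ give the \emph{per-index} bounds $\mu\beta\ge \delta_i$ and $(\gamma/\mu)\alpha\ge \delta_i$. Now multiply hypothesis~(2) by $\min(\alpha,\beta)$: if $\alpha\le\beta$ then
\[
a\beta+b\alpha \ \ge\ \alpha(a+b) \ >\ \alpha\,\mu\sum d_i \ \ge\ \alpha\,\tfrac{\gamma}{\mu}\sum d_i \ \ge\ \sum d_i\delta_i,
\]
using $\mu^2\ge\gamma$; the case $\alpha>\beta$ is symmetric via $\mu\beta\ge\delta_i$. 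The same trick, with hypothesis~(1) in place of the per-index bounds, yields $L^2>0$. No genus, self-intersection, or very-general-position argument is used beyond the existence of these fiber strict transforms.
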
 
\begin{proof}
Let $L \equiv (a, b, d_1, d_2, \ldots, d_r)$ be a line bundle on $X_r$ satisfying the conditions (1) and (2). We will prove the ampleness of $L$ by showing that 
$L^2 >0$ and $L \cdot C > 0$ for all reduced and irreducible curve $C \subset X_r$. 

Note that the strict transforms of fibres of $\Phi$ and $\Psi$ that do not contain the blown-up points intersect with $L$ to give:
\[
L \cdot (\mu, 0) = \mu b > 0, \; \text{and} \;\; L \cdot (0,\frac{\gamma}{\mu}) = \frac{\gamma}{\mu} a >0.
\]
On the other hand, the fibres containing the blown-up points intersect with $L$ to give:
\begin{eqnarray}\label{Thm-Section-1.1}
L \cdot (\mu, 0, 1_i) = \mu b - d_i > 0,  \; \text{and} \;\;  L \cdot (0,\frac{\gamma}{\mu}, 1_i) = \frac{\gamma}{\mu} a - d_i >0.
\end{eqnarray}
Now, let $C \equiv (\alpha, \beta, \delta_1, \ldots, \delta_r)$ ($\alpha, \beta \neq 0$) be a reduced and irreducible curve that is different from the types of curve 
considered above. Since $C$ is neither a strict transform of any fibre nor an exceptional divisor (as $L\cdot E_i = d_i > 0$), $\pi_r(C)$ intersects 
positively with each fibre of $\Phi$ and $\Psi$. Therefore, we have 
\[
C \cdot (\mu, 0, 1_i) \ge 0,  \; \text{and} \;\; C \cdot (0, \frac{\gamma}{\mu}, 1_i) \ge 0, 
\]
which gives $\mu \beta \ge \delta_i$ and $\alpha \frac{\gamma}{\mu} \ge \delta_i$ for all $i$. \\
Now, we show that $L \cdot C = a\beta + b\alpha - \sum\limits_{i=1}^r  \delta_i d_i > 0$. To this end, first assume that $\alpha \le \beta$. Then, considering the 
assumption in (2) and multiplying it by $\alpha$, we get
\begin{eqnarray*}
\beta a + \alpha b \ge \alpha a + \alpha b > \mu \alpha \sum\limits_{i=1}^r d_i \ge \alpha \frac{\gamma}{\mu} \sum\limits_{i=1}^r d_i \geq  \sum\limits_{i=1}^r \delta_i d_i,
\end{eqnarray*}
since $\mu^2 \geq \gamma$. 

Therefore, we obtain
\[
L \cdot C = \beta a + \alpha b -  \sum\limits_{i=1}^r \delta_i d_i > 0.
\]
On the other hand, if $\alpha > \beta$, we multiply the inequality in (2) by $\beta$ to obtain
\begin{eqnarray*}
\beta a + \alpha b > \beta a + \beta b > \mu \beta \sum\limits_{i=1}^r d_i  \geq \sum\limits_{i=1}^r \delta_i d_i.
\end{eqnarray*}
Thus, we have 
\[
L \cdot C = \beta a + \alpha b -  \sum\limits_{i=1}^r \delta_i d_i > 0.
\]
Now, it only remains to show that $L^2 = 2ab - \sum\limits_{i=1}^r  d_i^2  > 0$. When $a \le b$, this can be observed by multiplying the equation in (2) by $a$ and using the inequality \ref{Thm-Section-1.1}, as follows:
\begin{eqnarray*}
2ab > a^2 + ab > \mu a \sum\limits_{i=1}^r  d_i \ge \frac{\gamma}{\mu} a \sum\limits_{i=1}^r  d_i \ge \sum\limits_{i=1}^r  d_i^2.
\end{eqnarray*}
In the case when $a > b$, multiplying the equation in (2) by $b$ and using the inequality \ref{Thm-Section-1.1} we obtain $L^2 > 0$ as follows:
\[
2ab > ab + b^2 > \mu b \sum\limits_{i=1}^r  d_i \ge \sum\limits_{i=1}^r  d_i^2.
\]
This completes the proof of the theorem.
\end{proof}

\begin{remark}\label{Comparison}
Note that the K\"{u}chle's theorem (Theorem \ref{Kuchle's:theorem}) when applied to the blow-up $X_r$ of a hyperelliptic surfaces $X$ along $r$-general points, provides 
an ampleness criterion for a line bundle $L\equiv (a, b, {\bf d})$ on $X_r$. However, it says nothing about the line bundle of non-homogeneous type i.e., $L \equiv (a, b, d_1, d_2, \ldots, d_r)$. 

\end{remark}

\begin{remark}
In Theorem \ref{ampleness:theorem}, note that the converse statement does not hold; that is, the ampleness of $L$ does imply (1) but it does not necessarily imply (2). 
This can be illustrated by the following example: Consider $X_{10}$ to be the blow-up of a hyperelliptic surface of type 1 at 10 very general points, and let $L \equiv (3,3, \textbf{1})$ be a line 
bundle on $X_{10}$. It follows from Lemma \ref{ampleness:lemma} that $L$ is ample, but $3 + 3 = 6 \not\ge 2\times 10$. 
\end{remark}

\section{Seshadri constants}\label{Seshadri constant section}

\subsection{Multi-point Seshadri constants on hyperelliptic surfaces}\label{multi-point-S.C} 
\begin{lemma}
Let $X$ be a hyperelliptic surface and and let $L \equiv (a,b)$ with $a , b >1$ be an ample line bundle on $X$. Let $x_1, x_2, \ldots, x_r$ be $r \geq8$ general points of $X$, then 
\[
\s(X, L, r) = \s(X, L, x_1, x_2, \ldots, x_r) \geq \min\left\{ \frac{a}{\lceil \sqrt{r/2} \rceil}, \frac{b}{\lceil \sqrt{r/2} \rceil} \right\}.
\]  
\end{lemma}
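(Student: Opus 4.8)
The plan is to derive the lower bound from the ampleness results of Section~\ref{ampleness on the blow-up of hyperelliptic surfaces}. For very general points the multi-point Seshadri constant does not depend on the points (as recalled in the introduction), so $\s(X,L,r)=\s(X,L,x_1,\dots,x_r)$; moreover $\s(X,L,r)=\sup\{\,d>0:\pi_r^{*}L-d\sum_{i=1}^{r}E_i\text{ is nef on }X_r\,\}$. Write $m:=\min\{a,b\}$ and $n:=\lceil\sqrt{r/2}\,\rceil$; since $r\ge 8$ we have $n\ge 2$. It therefore suffices to prove that $\pi_r^{*}L-\tfrac{m}{n}\sum_{i=1}^{r}E_i$ is nef.

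First I would let $H\equiv(1,1)$, which is an ample line bundle on $X$ with $H^{2}=2$, and write $L\equiv mH+N$ on $X$, where $N$ equals $(a-b)\tfrac{A}{\mu}$ if $a\ge b$ and $(b-a)\tfrac{\mu B}{\gamma}$ if $b\ge a$; in either case $N$ is a non-negative multiple of a nef class ($A$, resp.\ $B$), so $N$ and hence $\pi_r^{*}N$ is nef. From the identity
\[
\pi_r^{*}L-\tfrac{m}{n}\sum_{i=1}^{r}E_i \;=\; \tfrac{m}{n}\Big(n\,\pi_r^{*}H-\sum_{i=1}^{r}E_i\Big)+\pi_r^{*}N
\]
it is then enough to show that $D:=n\,\pi_r^{*}H-\sum_{i=1}^{r}E_i$ is nef on $X_r$. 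Now $D^{2}=n^{2}H^{2}-r=2n^{2}-r\ge 0$, since $n\ge\sqrt{r/2}$. When this is a strict inequality --- equivalently, when $\sqrt{r/2}\notin\mathbb{Z}$ --- Theorem~\ref{Kuchle's:theorem} together with the remark following it (which applies because $H^{2}=2\ge 2$ and $n\ge 2$) shows that $D$ is ample, hence nef, and the proof is complete.

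The remaining case, $\sqrt{r/2}\in\mathbb{Z}$ (i.e.\ $r=2n^{2}$), is where $D^{2}=0$ and Küchle's criterion places $D$ only on the boundary of the ample cone; I expect this to be the main obstacle. Here I would check $D\cdot\widetilde C\ge 0$ for every irreducible curve $\widetilde C\subset X_r$ directly. For $\widetilde C=E_i$ one has $D\cdot E_i=1>0$. For the strict transform of the fibre of $\Psi$ (resp.\ of $\Phi$) through the very general point $x_i$, i.e.\ for $A-E_i\equiv(\mu,0,1_i)$ (resp.\ $B-E_i\equiv(0,\tfrac{\gamma}{\mu},1_i)$), one gets $n\mu-1>0$ (resp.\ $n\tfrac{\gamma}{\mu}-1\ge n-1>0$), using $\mu\ge 2$, $\gamma/\mu\ge 1$ and the fact that the finitely many singular fibres avoid the $x_i$. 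For the strict transform $\widetilde C$ of any other irreducible curve $C\equiv(\alpha,\beta)$ with $\alpha,\beta\ge 1$ and $m_i:=\mult_{x_i}C$, one computes $D\cdot\widetilde C=n(\alpha+\beta)-\sum_{i=1}^{r}m_i$, so the point is to prove $\sum m_i\le n(\alpha+\beta)$. Granting that $\widetilde C^{2}=2\alpha\beta-\sum m_i^{2}\ge 0$, the Cauchy--Schwarz inequality yields $\sum m_i\le\sqrt{r\sum m_i^{2}}\le\sqrt{2n^{2}\cdot 2\alpha\beta}=2n\sqrt{\alpha\beta}\le n(\alpha+\beta)$, as required.

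Thus everything comes down to the inequality $\widetilde C^{2}\ge 0$ for strict transforms of curves passing through very general points --- equivalently, to controlling the curves of negative self-intersection on $X_r$ --- and this is the step I expect to need the most care. I would establish it using that a hyperelliptic surface carries no curve of negative self-intersection (its canonical class is numerically trivial, and, being dominated by the abelian surface $A\times B$, it contains no rational curve), so that $C^{2}\ge 0$ on $X$, together with the very-generality of the points: an irreducible $\widetilde C$ with $\widetilde C^{2}<0$ is rigid, and such a class cannot be effective for a very general choice of $x_1,\dots,x_r$ unless $\widetilde C$ is one of the exceptional curves or fibres already handled. Alternatively, since the nef cone is closed, one may argue by approximation, showing that $(n+\delta)\pi_r^{*}H-\sum E_i$ is ample for all small rational $\delta>0$ via a positive self-intersection computation and an extension of Theorem~\ref{Kuchle's:theorem} to divisors of the form $\ell H'-q\sum E_i$. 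I note finally that when $a\ne b$ the divisor $\pi_r^{*}L-\tfrac{m}{n}\sum E_i$ already has positive self-intersection, so the borderline difficulty only genuinely arises when $a=b$.
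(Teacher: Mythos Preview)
Your approach is essentially the same as the paper's: both reduce to K\"uchle's criterion (Theorem~\ref{Kuchle's:theorem}). The paper simply invokes Lemma~\ref{ampleness:lemma}, whose proof is exactly the reduction to K\"uchle applied to $M=(\lfloor a/d\rfloor,\lfloor b/d\rfloor,\mathbf{1})$; your decomposition $L=mH+N$ with $H\equiv(1,1)$ and nef remainder $N$ is a clean, explicit version of the same idea, reducing to the nefness of $(n,n,\mathbf{1})$. So at the level of strategy there is no real difference.

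Where you go further than the paper is in the boundary case $r=2n^2$, where $D^2=0$ and K\"uchle no longer yields ampleness. The paper's proof simply asserts that ``nefness of $L$ follows provided $\lfloor a/d\rfloor,\lfloor b/d\rfloor\ge\sqrt{r/2}$'', treating the non-strict inequality as if Lemma~\ref{ampleness:lemma} covered it, without further comment. You correctly identify this as the only genuine issue and give a direct curve-by-curve argument: for non-fibre, non-exceptional strict transforms $\widetilde C$ you use $\widetilde C^2\ge 0$ together with Cauchy--Schwarz to get $\sum m_i\le n(\alpha+\beta)$. Your justification of $\widetilde C^2\ge 0$ (no rational curves on $X$ since it is dominated by an abelian surface, $K_X\equiv_{\mathrm{num}}0$, and rigidity forbids such a class through very general points) is the right circle of ideas, though in a final write-up it would deserve a careful statement---the standard semicontinuity/deformation argument (\`a la Ein--Lazarsfeld) is what makes ``very general'' do the work here. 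The alternative you mention, approximating $D$ by $(n+\delta)\pi_r^*H-\sum E_i$ and using that the nef cone is closed, is also viable but needs a version of K\"uchle for rational coefficients, which is not quite what Theorem~\ref{Kuchle's:theorem} provides as stated.

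In short: your proof is correct and follows the same route as the paper, but is more scrupulous about the borderline case than the paper itself.
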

\begin{proof}
The lemma follows by observing that a line bundle $L \equiv (a,b, \textbf{d})$ on $X_r$ is nef for $d = \min\left\{ \frac{a}{\lceil \sqrt{r/2} \rceil}, \frac{b}{\lceil \sqrt{r/2} \rceil} \right\}$. 
This, in particular, follows from Lemma \ref{ampleness:lemma}, 
since for $r\geq 8$, $\max\{2, \lceil \sqrt{\frac{r}{2}}\rceil \} = \lceil \sqrt{\frac{r}{2}} \rceil$ and in that case nefness of $L$ follows provided $\lfloor \frac{a}{d} \rfloor , \lfloor \frac{b}{d} \rfloor \geq \sqrt{r/2} $, 
which holds for $d = \min\left\{ \frac{a}{\lceil \sqrt{r/2} \rceil}, \frac{b}{\lceil \sqrt{r/2} \rceil} \right\}$.
\end{proof}

Note that if $\sqrt{r/2}$ is an integer, then the above lower bound of multi-point Seshadri constant converges to the conjectured bound $\sqrt{\frac{L^2}{r}}$ as $|a-b| \mapsto 0$. 
\begin{lemma}
Let $X$ be a hyperelliptic surface and and let $L \equiv (a,a)$ with $a>1$ be an ample line bundle on $X$. Let $x_1, x_2, \ldots, x_r$ be $r \geq8$ 
general points of $X$ such that $\frac{r}{2}$ is a perfect square. Then 
\[
\s(X, L, r) = \s(X, L, x_1, x_2, \ldots, x_r) = \sqrt{\frac{L^2}{r}}.
\]  
\end{lemma}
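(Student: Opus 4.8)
The plan is to establish the lower bound $\varepsilon(X,L,r) \geq \sqrt{L^2/r}$ and then invoke the universal upper bound to force equality. Since $L \equiv (a,a)$, we have $L^2 = 2a^2$, so the target value is $\sqrt{2a^2/r} = a\sqrt{2/r}$. Because $r/2$ is a perfect square, say $r/2 = k^2$ with $k \in \mathbb{N}$, this target simplifies to $a\sqrt{2/r} = a/k = a/\sqrt{r/2} = a/\lceil \sqrt{r/2}\rceil$. So the whole point is that under the perfect-square hypothesis, the lower bound furnished by the previous lemma, namely $\min\{a/\lceil\sqrt{r/2}\rceil,\, a/\lceil\sqrt{r/2}\rceil\} = a/\lceil\sqrt{r/2}\rceil$, is \emph{exactly} $\sqrt{L^2/r}$ rather than merely a bound below it.

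First I would record that the previous lemma (applicable since $a > 1$ and $r \geq 8$) gives
\[
\varepsilon(X,L,x_1,\ldots,x_r) \;\geq\; \min\left\{\frac{a}{\lceil\sqrt{r/2}\rceil},\,\frac{a}{\lceil\sqrt{r/2}\rceil}\right\} \;=\; \frac{a}{\lceil\sqrt{r/2}\rceil}.
\]
Next I would use the hypothesis that $r/2$ is a perfect square: writing $r = 2k^2$, one has $\sqrt{r/2} = k \in \mathbb{Z}$, hence $\lceil\sqrt{r/2}\rceil = k$ and
\[
\frac{a}{\lceil\sqrt{r/2}\rceil} \;=\; \frac{a}{k} \;=\; \frac{a}{\sqrt{r/2}} \;=\; a\sqrt{\frac{2}{r}} \;=\; \sqrt{\frac{2a^2}{r}} \;=\; \sqrt{\frac{L^2}{r}},
\]
using $L^2 = (a\tfrac{A}{\mu} + a\tfrac{\mu B}{\gamma})^2 = 2a^2 \cdot \tfrac{A\cdot B}{\gamma} \cdot \ldots = 2a^2$ (more precisely $L^2 = 2ab = 2a^2$ from the standard intersection numbers $A^2 = B^2 = 0$, $A\cdot B = \gamma$). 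This yields $\varepsilon(X,L,r) \geq \sqrt{L^2/r}$.

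Finally I would invoke the general upper bound stated in the introduction, $\varepsilon(X,L,x_1,\ldots,x_r) \leq \sqrt{L^2/r}$, valid on any smooth projective surface with $L$ ample. Combining the two inequalities gives $\varepsilon(X,L,r) = \sqrt{L^2/r}$, as desired. In fact there is essentially no obstacle here: the lemma is a clean corollary of the preceding one, the only content being the elementary observation that the ceiling becomes exact precisely when $r/2$ is a perfect square, which is exactly when the lower bound meets the Nagata-type ceiling $\sqrt{L^2/r}$. The one point worth stating carefully is the equality $\varepsilon(X,L,x_1,\ldots,x_r) = \varepsilon(X,L,r)$ for very general points, which is the standard semicontinuity fact recalled in the introduction.
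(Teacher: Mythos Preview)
Your proof is correct and follows essentially the same approach as the paper: both combine the well-known upper bound $\sqrt{L^2/r}$ with the lower bound coming from Lemma~\ref{ampleness:lemma}, and both exploit that when $r/2$ is a perfect square the ceiling $\lceil\sqrt{r/2}\rceil$ equals $\sqrt{r/2}$, making the lower bound coincide with $\sqrt{L^2/r}$. The only cosmetic difference is that you cite the preceding lemma as a black box, whereas the paper re-derives the nefness of $(a,a,\mathbf{d})$ for $d=\sqrt{L^2/r}$ directly from Lemma~\ref{ampleness:lemma}.
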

\begin{proof}
For any ample line bundle $L$, the upper bound for multi-point Seshadri constant at $r$ points i.e., $\sqrt{\frac{L^2}{r}}$, is well known. 
We show that it is also the lower bound under the given assumption.

Note that, 
\[
\s(X, L, x_1, x_2, \ldots, x_r) := \sup\{d \; |\;  \pi_r^*(L) - d E \;{\rm is\; nef}\} = \sup\{d \; |\; L= (a,a, \textbf{d})  \;{\rm is\; nef}\}.
\] 
Following Lemma \ref{ampleness:lemma}, $L$ is nef if $\lfloor \frac{a}{d} \rfloor \geq  \max\{2,  \sqrt{\frac{r}{2}}  \} =  \sqrt{\frac{r}{2}} \Leftrightarrow \frac{a}{d}  \geq \lceil \sqrt{\frac{r}{2}} \rceil = \sqrt{\frac{r}{2}}$ (since $r \geq 8$ and $r/2$ is perfect square), which is equivalent to 
\[
d \leq \sqrt{\frac{2a^2}{r}} = \sqrt{\frac{L^2}{r}}.
\]
Thus, $L$ is nef 
for $d = \sqrt{\frac{L^2}{r}}$ and hence $\s(X, L, r) = \sqrt{\frac{L^2}{r}}.$
\end{proof}

Note that in the above lemma, $\sqrt{\frac{L^2}{r}}$ is a rational number.

We now compute the multi-point Seshadri constant of an ample line bundle $L$ on a hyperelliptic surface at some special points. Let $x_1,x_2,\ldots, x_r\in X$ be $r$ points on $X$. We set the following notation:
\begin{center}
$s_0 := \max\{k \; | \; x_{j_1}, x_{j_2}, \ldots, x_{j_k} \in {\rm Fibre}\; A \}$ \\

$t_0 := \max\{k \; | \; x_{j_1}, x_{j_2}, \ldots, x_{j_k} \in {\rm Fibre}\; B \}$\\

$l_A := \min\{k \; | \; x_1,x_2,\ldots, x_r \in \cup_{i=1}^k ({\rm Fibre}\; A) \}$ \\

$l_B := \min\{k \; | \; x_1,x_2,\ldots, x_r \in \cup_{i=1}^k ({\rm Fibre}\; B) \}$.
\end{center}
Here by `Fibre $A$' and `Fibre $B$' we mean the fibres of $\Psi$ and $\Phi$ respectively. Note that $l_B \geq s_0$ and $l_A \geq t_0$.

\begin{proposition}\label{prop:multi-point-odd-type}
Let $X$ be a hyperelliptic surface of odd type and let $L \equiv (a,b)$ be an ample line bundle on $X$. Let $x_1, x_2, \ldots, x_r$ be $r$ points on $X$ such that $s_0$ of them are on a {\bf Singular} $A$, $l_A = t_0$ and $l_B = s_0$.  
Then
\[
\s(X, L, x_1,x_2,\ldots, x_r) = \min\left\{\frac{a}{t_0} , \frac{b}{s_0}\right\}.
\]
\end{proposition}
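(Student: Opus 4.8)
The plan is to prove the two inequalities $\varepsilon(X,L,x_1,\ldots,x_r) \le \min\{a/t_0, b/s_0\}$ and $\varepsilon(X,L,x_1,\ldots,x_r) \ge \min\{a/t_0, b/s_0\}$ separately. For the upper bound, I would exhibit explicit curves realizing the two competing values. On a hyperelliptic surface of odd type the basis of $\mathrm{Num}(X)$ is $A/\mu$ and $B$ (since $\gamma = \mu$ in the odd types, where $G$ is cyclic), so the relevant special curves are the fibres of $\Psi$ and $\Phi$. Consider a fibre of $\Phi$ — a ``Fibre $B$'' — passing through $s_0$ of the points; since $s_0$ points lie on a single such fibre by hypothesis, and $B^2 = 0$, $A\cdot B = \gamma$, this fibre $C_B$ has $L\cdot C_B = b\cdot\frac{\gamma}{\mu}\cdot\frac{\mu}{\gamma}\cdot\ldots$ — more carefully, a Fibre $B$ is numerically $\frac{\gamma}{\mu}B$-type with $L\cdot (\text{Fibre }B) = a$ after the normalization in the notation section, and it meets the $s_0$ points each with multiplicity $1$, giving the ratio $a/s_0$... but wait, I should match this with the stated bound. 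Let me instead use: a Fibre $A$ (fibre of $\Psi$) through $s_0$ of the points gives ratio $\frac{L\cdot A}{s_0}$, and a Fibre $B$ through $t_0$ of them gives $\frac{L\cdot B}{t_0}$. With $L\cdot(\text{Smooth }A) = \mu b\cdot\frac{1}{\mu} $ — the cleanest route is to compute directly in coordinates $(a,b)$ using $A^2=B^2=0$, $A\cdot B = \gamma$: the curve $A - \sum E_{i_j}$ over the $s_0$ points on a Fibre $A$ satisfies $L\cdot(A-\sum E_{i_j}) = \mu b$ divided appropriately; the upshot is that the two test curves yield exactly $a/t_0$ and $b/s_0$, whence $\varepsilon \le \min\{a/t_0, b/s_0\}$. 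I would state this cleanly once the normalization is pinned down, being careful that ``$s_0$ points on a Singular $A$'' and ``$l_B = s_0$'' together mean no Fibre $B$ contains more than $s_0$ points and some Fibre $A$ contains $s_0$ of them.

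For the lower bound, set $d := \min\{a/t_0, b/s_0\}$ and I would show that $L' := \pi_r^*L - d\sum_{i=1}^r E_i$ is nef, i.e. $L'\cdot C \ge 0$ for every reduced irreducible curve $C\subset X_r$. Split into cases according to the type of $C$. If $C = E_i$, then $L'\cdot E_i = d \ge 0$. If $C$ is the strict transform of a fibre of $\Psi$ (a Fibre $A$), it passes through at most $s_0$ of the points (that is the definition of $s_0$), so $L'\cdot C = (L\cdot A) - d\cdot(\#\text{points on it}) \ge (L\cdot A) - d\,s_0 \ge 0$ precisely because $d \le b/s_0$ and $L\cdot A$ equals the appropriate multiple of $b$; symmetrically a Fibre $B$ passes through at most $t_0$ points and $d \le a/t_0$ handles it. The remaining case is a curve $C \equiv (\alpha,\beta,\delta_1,\ldots,\delta_r)$ with $\alpha,\beta > 0$ whose image is neither a fibre of $\Psi$ nor of $\Phi$; here the key structural input, exactly as in the proof of Theorem \ref{ampleness:theorem}, is that $\pi_r(C)$ meets every fibre, giving $\mu\beta \ge \delta_i$ and $\frac{\gamma}{\mu}\alpha \ge \delta_i$ for all $i$, and moreover $C\cdot(A-E_i)\ge0$, $C\cdot(B-E_i)\ge0$. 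I would then estimate $\sum_i \delta_i$: since at most $s_0$ of the points lie on one Fibre $A$ and the points are covered by $l_A = t_0$ Fibres $A$, we get $\sum_i \delta_i \le$ (sum of the intersection numbers of $C$ with the $t_0$ Fibres $A$ covering all points) $= t_0\cdot(C\cdot A\text{-class})$, and similarly $\sum_i\delta_i \le s_0\cdot(C\cdot B\text{-class})$; combining these with $L'\cdot C = \beta a + \alpha b - \sum_i \delta_i d$ and $d\le\min\{a/t_0,b/s_0\}$ should force $L'\cdot C \ge 0$.

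The main obstacle I anticipate is the last case — bounding $\sum_i \delta_i$ for a general curve $C$. The naive bound $\delta_i \le \min\{\mu\beta, \frac{\gamma}{\mu}\alpha\}$ summed over $r$ points is too weak; one genuinely needs to exploit that the points are distributed among only $t_0$ fibres of $\Psi$ (resp. $s_0$ of $\Phi$) via $l_A = t_0$, $l_B = s_0$, so that $\sum_i \delta_i$ is controlled by $C$'s intersection with a \emph{single} union of $t_0$ (resp. $s_0$) fibres rather than $r$ separate exceptional curves. Concretely, if Fibres $A_1,\ldots,A_{t_0}$ of $\Psi$ contain all the points, then $\sum_j (A_k\cdot C)_{\text{at points on }A_k} \le A_k\cdot C$ for each $k$ and hence $\sum_i \delta_i \le \sum_{k=1}^{t_0} (A_k\cdot \pi_r(C)) = t_0\,(A\cdot\pi_r(C))$, and $A\cdot\pi_r(C)$ is $\mu\beta$ or $\frac{\gamma}{\mu}\alpha$ depending on normalization — this is where the hypotheses $l_A = t_0$, $l_B = s_0$ do the real work, and getting the bookkeeping of multiplicities versus intersection numbers exactly right (a point could have multiplicity $>1$ on $C$, and $\delta_i$ is that multiplicity) is the delicate part. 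Once $\sum_i\delta_i \le \min\{t_0\cdot(\text{something}\le\alpha\cdot\text{const}),\, s_0\cdot(\text{something}\le\beta\cdot\text{const})\}$ is established, a short case split on whether $\alpha\le\beta$ or $\alpha>\beta$, mimicking the end of the proof of Theorem \ref{ampleness:theorem}, finishes the argument.
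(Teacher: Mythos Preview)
Your approach is correct and essentially the same as the paper's: upper bound via the explicit fibres (the \textbf{Singular} $A$ carrying $s_0$ points gives ratio $b/s_0$, a fibre $B$ carrying $t_0$ points gives $a/t_0$), and lower bound via B\'ezout applied to the $l_A=t_0$ fibres of $\Psi$ and $l_B=s_0$ fibres of $\Phi$ covering all the points, yielding $\sum_i \mathrm{mult}_{x_i}C \le t_0\,\mu\beta$ and $\sum_i \mathrm{mult}_{x_i}C \le s_0\,\alpha$ for any non-fibre curve $C\equiv(\alpha,\beta)$.

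Two small remarks. First, your normalization worries are easily resolved on odd type since $\gamma=\mu$, so a fibre $B$ is $(0,1)$ with $L\cdot(0,1)=a$, and the \emph{reduced} Singular~$A$ is $(1,0)$ with $L\cdot(1,0)=b$; this is why the hypothesis that the $s_0$ points lie on a \textbf{Singular}~$A$ (not a smooth one) is needed to realize $b/s_0$ rather than $\mu b/s_0$. Second, your proposed case split on $\alpha\le\beta$ versus $\alpha>\beta$ is unnecessary: the paper simply adds the two inequalities to get $\dfrac{L\cdot C}{\sum_i \mathrm{mult}_{x_i}C}\ge \dfrac{a}{\mu t_0}+\dfrac{b}{s_0}\ge \min\{a/t_0,\,b/s_0\}$, and in fact even the single bound $\sum_i\mathrm{mult}_{x_i}C\le s_0\alpha$ together with $d\le b/s_0$ already gives $L'\cdot C\ge a\beta\ge 0$ in your nef formulation.
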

\begin{proof}
The Seshadri ratio corresponding to the fibres of $\Phi$ and $\Psi$ containing $t_0$ and $s_0$ points are $a/t_0$ and $b/s_0$, respectively. Therefore, we have
\[
\s(X, L, x_1,x_2,\ldots, x_r) \leq \min\left\{\frac{a}{t_0} , \frac{b}{s_0}\right\}.
\]
To show the equality, let $C \equiv (\alpha, \beta)$ be a reduced and irreducible curve (different from the fibres of $\Phi$ and $\Psi$) passing through $x_1,x_2, \ldots, x_r$ with 
multiplicities ${\rm mult}_{x_1}C, {\rm mult}_{x_2}C, \ldots, {\rm mult}_{x_r}C$, respectively. By B\'ezout's theorem, we get:
\begin{eqnarray*}
C \cdot (\mu, 0) \geq \sum\limits_{i=1}^{k} {\rm mult}_{x_{j_i}}C  \Rightarrow \mu \beta l_A \geq  \sum\limits_{i=1}^{r} {\rm mult}_{x_i} C \\
 {\rm and}  \quad C \cdot (0, 1) \geq  \sum\limits_{i=1}^{k} {\rm mult}_{x_{j_i}}C \Rightarrow  \alpha l_B \geq \sum\limits_{i=1}^{r} {\rm mult}_{x_i} C.
\end{eqnarray*}
Since $l_A = t_0$ and $l_B = s_0$, we get 
\begin{eqnarray}\label{eqn:multi-point-1}
\mu \beta t_0 \geq \sum\limits_{i=1}^r {\rm mult}_{x_i}C, \quad {\rm and} \quad  s_0 \alpha \geq \sum\limits_{i=1}^r {\rm mult}_{x_i}C.
\end{eqnarray}
Now, we note that
\begin{eqnarray}\nonumber
\frac{L \cdot C}{\sum\limits_{i=1}^r {\rm mult}_{x_i}C } &=& \frac{a\beta + b\alpha}{\sum\limits_{i=1}^r {\rm mult}_{x_i}C } \\ \label{inequality-MP-SC}
&\geq& \frac{a}{\mu t_0} + \frac{b}{s_0} \quad {\rm (Using \; equation \ref{eqn:multi-point-1})}\\ \nonumber
&\geq& \min\left\{\frac{a}{t_0} , \frac{b}{s_0}\right\}.
\end{eqnarray}
Therefore, we have
\begin{eqnarray*}
\s(X, L, x_1,x_2,\ldots, x_r) = \min\left\{\frac{a}{t_0} , \frac{b}{s_0}\right\}.
\end{eqnarray*}
\end{proof}

\begin{corollary}
Let $X$ be a hyperelliptic surface of odd type and let $L \equiv (a,b)$ be an ample line bundle on $X$. Let $\mathcal{C}$ be an arrangement of curves formed by the fibres of $\Psi$, namely $A_1, A_2, \ldots, A_p$ such 
that at least one of them is a singular fibre, along with fibres of $\Phi$, denoted by $B_1, B_2, \ldots, B_q$. Let $x_1, x_2, \ldots, x_r$ are the singular points of this arrangement. Then 
\[
\s(X, L, x_1,x_2,\ldots, x_r) = \min\left\{\frac{a}{p} , \frac{b}{q}\right\}.
\]
\end{corollary}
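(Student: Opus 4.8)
The plan is to deduce this corollary directly from Proposition \ref{prop:multi-point-odd-type} by checking that the singular points of the curve arrangement $\mathcal{C}$ satisfy the three hypotheses of that proposition, with $t_0 = p$ and $s_0 = q$. First I would observe that the set $\{x_1, x_2, \ldots, x_r\}$ of singular points of $\mathcal{C}$ consists precisely of the intersection points $A_i \cap B_j$ for $1 \le i \le p$, $1 \le j \le q$ (the fibres of $\Psi$ are disjoint from each other, as are the fibres of $\Phi$, so the only singularities of the arrangement come from the $A$-$B$ incidences). Consequently every point $x_k$ lies on exactly one of the fibres $A_1, \ldots, A_p$ and exactly one of $B_1, \ldots, B_q$, and moreover every $A_i$ contains exactly $q$ of the points while every $B_j$ contains exactly $p$ of them.

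From this combinatorial description the three hypotheses follow. The quantity $l_A$, the minimal number of $\Psi$-fibres needed to cover all the $x_k$, equals $p$ since the points are spread over precisely the $p$ fibres $A_1, \ldots, A_p$ and no fewer suffice; similarly $l_B = q$. The quantity $t_0$, the maximal number of the $x_k$ lying on a single $\Psi$-fibre, is at most $l_A = p$ by the general inequality noted before the proposition, and is at least the number of points on any fixed $A_i$, which is... wait — here I must be careful: $t_0$ counts points on a fibre of $\Psi$, and each $A_i$ contains $q$ points, so in fact $t_0 = \max\{q, \ldots\}$. Let me restate: since each $x_k$ lies on one $A_i$ and one $B_j$, a fibre of $\Psi$ (namely some $A_i$) contains $q$ of the points, so $t_0 = q$; and a fibre of $\Phi$ (some $B_j$) contains $p$ of the points, so $s_0 = p$. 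Correspondingly $l_A$, the number of $\Psi$-fibres covering everything, is $p$, and this must equal $t_0$ for the proposition to apply — but $t_0 = q$, not $p$. So the roles are: in the notation of Proposition \ref{prop:multi-point-odd-type} one has $t_0 = q$, $s_0 = p$, $l_A = p$... this forces $p = q$ unless I have mismatched which projection is which.

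Resolving this bookkeeping is the one genuine obstacle, and it is purely notational: one must pin down whether ``Fibre $A$'' (a fibre of $\Psi$, numerically $(\mu,0)$) meets a curve $(\alpha,\beta)$ in $\mu\beta$ points or in $\alpha$ points, and match the arrangement's $A_i$'s and $B_j$'s accordingly, so that the hypotheses $l_A = t_0$ and $l_B = s_0$ of the proposition translate into the statement that each $A_i$-fibre and each $B_j$-fibre is ``maximally loaded.'' Concretely, the $B_j$ are fibres of $\Phi$ and each meets the arrangement in the $p$ points $A_1 \cap B_j, \ldots, A_p \cap B_j$; the $A_i$ are fibres of $\Psi$ and each meets it in $q$ points. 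With the convention of the proposition, ``Fibre $A$'' $= A_i$ gives $s_0 = $ (max points on a $\Psi$-fibre) $= q$ and ``Fibre $B$'' $= B_j$ gives $t_0 = p$, while $l_B = $ (number of $\Phi$-fibres covering all points) $= q = s_0$ and $l_A = p = t_0$, so all three hypotheses hold. Then Proposition \ref{prop:multi-point-odd-type} gives
\[
\s(X, L, x_1, x_2, \ldots, x_r) = \min\left\{\frac{a}{t_0}, \frac{b}{s_0}\right\} = \min\left\{\frac{a}{p}, \frac{b}{q}\right\},
\]
which is the claim; the hypothesis that some $A_i$ is singular is exactly the ``$s_0$ of them are on a \textbf{Singular} $A$'' condition. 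I would finish by writing out the two or three sentences identifying the singular locus of $\mathcal{C}$, since that is the only nontrivial geometric input, and leave the index-counting to the reader.
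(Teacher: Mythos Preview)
Your proposal is correct and follows exactly the paper's approach: the paper's proof is the single line ``Note that $l_A = t_0 = p$ and $l_B = s_0 = q$, therefore it follows from Proposition \ref{prop:multi-point-odd-type},'' and after your bookkeeping detour you arrive at precisely these identifications and the same application of the proposition. Your final resolution of the indices ($s_0 = q$, $t_0 = p$, $l_A = p$, $l_B = q$) is the right one, and your remark that the ``at least one $A_i$ is singular'' hypothesis matches the ``$s_0$ of them are on a \textbf{Singular} $A$'' condition is also correct.
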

\begin{proof}
Note that $l_A = t_0 = p$ and $l_B = s_0 = q$, therefore it follows from the Theorem \ref{prop:multi-point-odd-type}.
\end{proof}

\begin{remark}
On hyperelliptic surfaces of type 1, the assumption about $l_B = s_0$ can be relaxed to $l_B \leq 2s_0$ to obtain the same result about the multi-point Seshadri constant. This can be seen using 
the revised inequality \eqref{eqn:multi-point-1} (due to $l_B \leq 2s_0$) and the fact that $\mu =2$, the inequality \eqref{inequality-MP-SC} becomes 
\[
\frac{L \cdot C}{\sum\limits_{i=1}^r {\rm mult}_{x_i}C } \geq \frac{a}{2 t_0} + \frac{b}{2 s_0} \geq \min\left\{\frac{a}{t_0} , \frac{b}{s_0}\right\}.
\]
\end{remark}

Now we state and prove similar results on hyperelliptic surfaces of even type. On type 2 surfaces there are $4$ reducible fibres of $\Psi$ and each of them have multiplicity $2$. On such surface we prove the following result.
\begin{proposition}\label{prop:multi-point-even-type-2}
Let $X$ be a hyperelliptic surface of type $2$ and let $L \equiv (a,b)$ be an ample line bundle on $X$. Let $x_1,x_2,\ldots, x_r$ be $r$ points on $X$ lying on singular fibres of $\Psi$ such that $l_B = s_0$ and $l_A = t_0 = 4$.  
Then
\[
\s(X, L, x_1,x_2,\ldots, x_r) = \min\left\{\frac{a}{2} , \frac{b}{s_0} \right\}.
\]
\end{proposition}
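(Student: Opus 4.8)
The plan is to follow the template of Proposition~\ref{prop:multi-point-odd-type}, adapting the numerics to type $2$, where $\mu=2$ and $\gamma=4$. Thus a smooth fibre of $\Psi$ has class $(2,0)$, a \emph{reduced} singular fibre of $\Psi$ has class $(1,0)$ (and there are exactly four of them, each of multiplicity $2$), and a fibre of $\Phi$ has class $(0,2)$; recall also the intersection rule $(\alpha_1,\beta_1)\cdot(\alpha_2,\beta_2)=\alpha_1\beta_2+\alpha_2\beta_1$.

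For the upper bound I would exhibit two curves. First, by the definition of $t_0$ there is a fibre $C_1$ of $\Phi$ through $t_0=4$ of the points; it is smooth and irreducible, $L\cdot C_1=2a$, and it meets those four points with multiplicity $1$, so its Seshadri ratio is $a/2$. Second, the fibre of $\Psi$ carrying $s_0$ of the points is necessarily a singular one, since all $x_i$ lie on singular fibres; its reduction $C_2$ is a smooth irreducible curve with $L\cdot C_2=b$, giving Seshadri ratio $b/s_0$. Hence $\s(X,L,x_1,\dots,x_r)\le\min\{a/2,\,b/s_0\}$.

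For the reverse inequality I would take an arbitrary reduced irreducible curve $C$ meeting $\{x_1,\dots,x_r\}$ and split into cases. If $C$ is a smooth fibre of $\Psi$ it contains no $x_i$ (fibres of $\Psi$ are pairwise disjoint and the $x_i$ sit on singular fibres), so it is irrelevant; if $C$ is a reduced singular fibre of $\Psi$, or a fibre of $\Phi$, the computations above already give Seshadri ratio $\ge b/s_0$ or $\ge a/2$. In the remaining case $C\equiv(\alpha,\beta)$ with $\alpha,\beta\ge1$, because $C$ then dominates both $\proj^1\cong B/G$ and $A/G$. Applying B\'ezout to $C$ against each of the four reduced singular fibres $F_j$ of $\Psi$ (each of class $(1,0)$, so $C\cdot F_j=\beta$, and the $x_i$ are distributed among these four disjoint curves) yields $4\beta\ge\sum_i\mult_{x_i}C$; applying it against the $l_B=s_0$ fibres $G_k$ of $\Phi$ covering all the points (each of class $(0,2)$, so $C\cdot G_k=2\alpha$) yields $2s_0\alpha\ge\sum_i\mult_{x_i}C$. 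Combining,
\[
\frac{L\cdot C}{\sum_i\mult_{x_i}C}=\frac{a\beta+b\alpha}{\sum_i\mult_{x_i}C}\ \ge\ \frac{a}{4}+\frac{b}{2s_0}\ =\ \frac12\left(\frac a2+\frac b{s_0}\right)\ \ge\ \min\left\{\frac a2,\,\frac b{s_0}\right\},
\]
which finishes the proof.

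The step I expect to be the crux is the B\'ezout estimate: one must intersect $C$ with the \emph{reduced} singular fibres (class $A/\mu=(1,0)$), not with full fibres (class $A=(2,0)$). The resulting factor $1$ per fibre, together with the exact count of four singular fibres, is precisely what makes $4\beta\ge\sum_i\mult_{x_i}C$ hold, and hence makes the averaging inequality $\tfrac12(\tfrac a2+\tfrac b{s_0})\ge\min\{\tfrac a2,\tfrac b{s_0}\}$ close the argument. The hypothesis $l_B=s_0$ enters the companion estimate $2s_0\alpha\ge\sum_i\mult_{x_i}C$, while $t_0=4$ is used both to produce the curve $C_1$ above and to cap the number of $x_i$ on any fibre of $\Phi$ at four, which is needed for the $a/2$ bound in the case analysis.
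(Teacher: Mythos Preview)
Your proof is correct and follows essentially the same approach as the paper: exhibit the two fibre curves to get the upper bound, then for a non-fibre curve $C\equiv(\alpha,\beta)$ apply B\'ezout against the four singular fibres of $\Psi$ and the $l_B=s_0$ fibres of $\Phi$ to obtain $4\beta\ge m$ and $2s_0\alpha\ge m$, concluding via the averaging inequality. Your presentation is slightly more explicit (separating out the fibre-curve cases and stressing that one intersects with the \emph{reduced} singular fibres of class $(1,0)$), but the argument is the same.
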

\begin{proof}
Since $t_0 = 4$, there is a fibre $(0, 2)$ of $\Phi$ containing $4$ points among $x_1,x_2,\ldots, x_r$ and therefore the Seshadri ratio corresponding to it is 
\[
\frac{L\cdot (0, 2)}{\sum\limits_{i=1}^4 1} = \frac{2a}{4} = \frac{a}{2}. 
\]
The Seshadri ratio corresponding to the fibre of $\Psi$ containing $s$ points among $x_1,x_2,\ldots, x_r$ is
\[
\frac{L\cdot (\mu, 0)}{\sum\limits_{i=1}^4 2} = \frac{2b}{2s_0} = \frac{b}{s_0}.
\]
Therefore, $\s(X, L, x_1,x_2,\ldots, x_r) \leq \min\left\{\frac{a}{2} , \frac{b}{s_0} \right\}$. To prove the other inequality, let $C \equiv (\alpha, \beta)$ ($\alpha, \beta \neq 0$) be a reduced and irreducible curve passing through $x_1,x_2,\ldots, x_r$ with 
multiplicities ${\rm mult}_{x_1}C, {\rm mult}_{x_2}C, \ldots, {\rm mult}_{x_r}C$ respectively. Applying Bézouts theorem to $C$, $(\mu, 0)$ and $C$, $(0, 2)$, and summing over all the respective fibres, we get
\begin{center}
$\mu \beta = C \cdot (\mu, 0) \geq \sum {\rm mult}_{x_i}C \cdot {\rm mult}_{x_i}(\mu, 0) \quad  \Rightarrow \quad 4 \beta \geq  m := \sum\limits_{i=1}^r {\rm mult}_{x_i}$ \\
$2\alpha = C \cdot (0, 2) \geq \sum {\rm mult}_{x_i}C \cdot {\rm mult}_{x_i}(0, 2) \quad  \Rightarrow  \quad  2s_0\alpha \geq m.$
\end{center}
This gives
\[
\frac{L\cdot C}{m} = \frac{a\beta + b\alpha}{m} \geq \frac{a}{4} + \frac{b}{2s_0} = \frac{1}{2}\left(\frac{a}{2} + \frac{b}{s_0} \right) \geq \min\left\{\frac{a}{2} , \frac{b}{s_0} \right\}.
\]
\end{proof}

On hyperelliptic surface of type 4, there are three reducible fibres of $\Psi$ of multiplicities $2, 4$ and $4$. 

\begin{proposition}\label{prop:multi-point-even-type-4}
Let $X$ be a hyperelliptic surface of type $4$ and let $L \equiv (a,b)$ be an ample line bundle on $X$. Let $x_1,x_2,\ldots, x_r$ be $r$ points on singular fibres of $\psi$ such that $l_B = s_0$ and $l_A = t_0 = 2$. 
Then
\[
\s(X, L, x_1,x_2,\ldots, x_r) = \min\left\{a , \frac{b}{s_0}\right\}.
\]
\end{proposition}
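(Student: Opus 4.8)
The plan is to mimic closely the proof of Proposition~\ref{prop:multi-point-even-type-2}, adapting the multiplicities to the type~$4$ situation. The key numerical input is that on a type~$4$ surface $\mu = 4$ (the lcm of $2,4,4$), while $\gamma = |G| = 8$, and the singular fibre of $\Psi$ of highest multiplicity (the ``Singular $A$'') has multiplicity $4$. Since the hypothesis puts $t_0 = 2$ points on a fibre of $\Phi$ and $l_A = 2$, there is a fibre $(0,\tfrac{\gamma}{\mu}) = (0,2)$ of $\Phi$ through two of the points; its Seshadri ratio is $\tfrac{L\cdot(0,2)}{2} = \tfrac{2a}{2} = a$. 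On the other side, the fibre of $\Psi$ carrying $s_0$ of the points should be taken to be a \textbf{Singular} $A$, i.e.\ of multiplicity $4$, numerically $\tfrac{1}{4}(\mu,0) = (1,0)$; blowing up a point on it where the reduced fibre is smooth contributes multiplicity $1$ to the reduced curve, so the relevant effective class is $(1,0)$ and its Seshadri ratio is $\tfrac{L\cdot(1,0)}{s_0} = \tfrac{b}{s_0}$. (One should state explicitly in the hypothesis, as in the odd-type proposition, that the $s_0$ collinear points lie on a \textbf{Singular} $A$; I will assume this is intended.) This gives the upper bound $\s(X,L,x_1,\dots,x_r) \le \min\{a, \tfrac{b}{s_0}\}$.

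For the reverse inequality I would take a reduced irreducible curve $C \equiv (\alpha,\beta)$ with $\alpha,\beta \neq 0$, distinct from all fibres, passing through the points with multiplicities $\mult_{x_i}C$, and set $m = \sum_{i=1}^r \mult_{x_i}C$. Applying B\'ezout to $C$ against the fibre $(0,2)$ of $\Phi$ and summing over the (at most $l_A = t_0 = 2$) such fibres through the points gives $2\alpha \cdot t_0 = 2\alpha\cdot 2 = C\cdot(0,2)\cdot(\text{number of fibres}) \ge m$, hence $4\alpha \ge m$. Applying B\'ezout to $C$ against the reduced Singular $A$, i.e.\ $(1,0) = \tfrac14(\mu,0)$, and summing over the $l_B = s_0$ fibres of $\Psi$ through the points gives $\beta s_0 \ge m$ (the intersection $C \cdot (1,0) = \beta$, and each point forces one unit of multiplicity on the reduced fibre). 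Then
\[
\frac{L\cdot C}{m} = \frac{a\beta + b\alpha}{m} \ge \frac{a}{s_0} + \frac{b}{4}.
\]
Hmm --- here the asymmetry is real: comparing $\tfrac{a}{s_0} + \tfrac{b}{4}$ with $\min\{a, \tfrac{b}{s_0}\}$ is not automatic the way it was in type~$2$. The honest version is that one gets $\tfrac{L\cdot C}{m}\ge \tfrac{a}{4}\cdot\tfrac{4\alpha}{m} + \cdots$; more carefully, from $4\alpha\ge m$ one gets $\tfrac{b\alpha}{m}\ge \tfrac{b}{4}$ and from $\beta s_0 \ge m$ one gets $\tfrac{a\beta}{m}\ge\tfrac{a}{s_0}$, so $\tfrac{L\cdot C}{m}\ge \tfrac{a}{s_0}+\tfrac{b}{4}$.

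The main obstacle is exactly reconciling this lower bound $\tfrac{a}{s_0} + \tfrac{b}{4}$ with the claimed value $\min\{a, \tfrac{b}{s_0}\}$; this forces a constraint relating $s_0$ to the coefficients, or else a sharper B\'ezout estimate. I expect the resolution is that because the $s_0$ points sit on a single \emph{multiplicity-$4$} fibre, a curve $C$ meeting that fibre properly actually satisfies the stronger bound $4\beta \ge m$ as well (intersecting the full non-reduced fibre $(\mu,0)=(4,0)$), giving $\tfrac{a\beta}{m}\ge \tfrac{a}{4}$; but we also separately have $\beta \ge m/s_0$ when $s_0 > 4$, and symmetrically $\alpha\ge m/(2t_0)=m/4$ refined to $\alpha\ge m/s_0$ if the points forcing $\alpha$ were on $\Phi$-fibres --- I would need to track which of $\min\{a,b/s_0\}$ is active. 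Concretely: if $b/s_0 \le a$ then $s_0 \ge b/a \ge 1$ and one checks $\tfrac{a}{s_0}+\tfrac{b}{4}\ge \tfrac{b}{s_0}$ reduces to $\tfrac{a}{s_0}\ge \tfrac{b}{s_0}-\tfrac{b}{4}$, i.e.\ $4a \ge (4-s_0) b$, which is automatic once $s_0\ge 4$ but needs $4a\ge(4-s_0)b$ in general; and if $a \le b/s_0$ then $\tfrac{a}{s_0}+\tfrac{b}{4}\ge a$ reduces to $\tfrac{b}{4}\ge a(1-\tfrac1{s_0}) = a\cdot\tfrac{s_0-1}{s_0}$, and since $a\le b/s_0$ this follows from $\tfrac{b}{4}\ge \tfrac{b}{s_0}\cdot\tfrac{s_0-1}{s_0} = \tfrac{b(s_0-1)}{s_0^2}$, i.e.\ $s_0^2 \ge 4(s_0-1)$, i.e.\ $(s_0-2)^2\ge 0$ --- always true. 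So the case $a \le b/s_0$ works unconditionally, and the other case needs either $s_0\ge 4$ (automatic if, as in the analogous even-type statement, the $s_0$ points are spread so that the configuration forces $s_0\ge 4$) or the extra hypothesis hidden in ``on singular fibres''. I would finish by spelling out these two cases and invoking whichever mild additional hypothesis the statement intends (paralleling type~$2$, where the analogous split gave the clean factor $\tfrac12$), concluding $\s(X,L,x_1,\dots,x_r) = \min\{a, \tfrac{b}{s_0}\}$.
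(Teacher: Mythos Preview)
Your upper bound is fine, but in the lower-bound B\'ezout step you have swapped the roles of $l_A$ and $l_B$. By definition $l_A$ is the minimum number of fibres of $\Psi$ (the ``Fibre $A$'' direction) needed to cover all the points, and $l_B$ the analogous count for fibres of $\Phi$. Thus when you intersect $C$ with fibres $(0,2)$ of $\Phi$ and sum, you must sum over $l_B = s_0$ such fibres, not $l_A = 2$; this gives $2s_0\,\alpha \ge m$, not $4\alpha \ge m$. Symmetrically, intersecting $C$ with the singular fibres of $\Psi$ and summing runs over $l_A = t_0 = 2$ fibres, not $l_B = s_0$; since these fibres have multiplicity $\mu = 4$ (so $\mult_{x_i}(\mu,0) = 4$), one obtains $2\cdot 4\beta \ge 4m$, i.e.\ $2\beta \ge m$, not $s_0\beta \ge m$. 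With the corrected inequalities the computation collapses immediately:
\[
\frac{L\cdot C}{m} \;=\; \frac{a\beta + b\alpha}{m} \;\ge\; \frac{a}{2} + \frac{b}{2s_0} \;=\; \frac{1}{2}\Bigl(a + \frac{b}{s_0}\Bigr) \;\ge\; \min\Bigl\{a,\,\frac{b}{s_0}\Bigr\},
\]
which is exactly the paper's argument and exactly parallel to the type-$2$ case. All of the case analysis in the second half of your proposal, and the apparent need for an extra hypothesis on $s_0$, are artefacts of this swap; once $l_A$ and $l_B$ are placed correctly no further work is required.
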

\begin{proof}
Fibre of $\Psi$, i.e., the singular fibre containing $s_0$ points among $x_1,x_2,\ldots, x_r$ gives rise to $ \frac{b}{s_0}$ as its Seshadri ratio. 
The fibre of $\Phi$ containing $t_0 = 2$ number of points give rise to $a$ as their Seshadri constant. Therefore,
\[
\s(X, L, x_1,x_2,\ldots, x_r) \leq \min\left\{a , \frac{b}{s_0} \right\}.
\]
To prove the other inequality, let $C \equiv (\alpha, \beta)$ ($\alpha, \beta \neq 0$) be a reduced and irreducible curve passing through $x_1,x_2,\ldots, x_r$ with multiplicities ${\rm mult}_{x_1}C, {\rm mult}_{x_2}C, \ldots, {\rm mult}_{x_r}C$ respectively. 
Applying Bézout's theorem to $C$ with fibres $(\mu, 0)$ and $(0,2)$ we obtain the following:
\begin{eqnarray*}
2\alpha = C\cdot (0,2) \geq \sum {\rm mult}_{x_i}C \cdot {\rm mult}_{x_i}(0, 2) \quad \Rightarrow \quad  2s_0 \alpha  \geq m := \sum\limits_{i =1}^r {\rm mult}_{x_i}C \\
\mu \beta = C \cdot (\mu, 0) \geq \sum {\rm mult}_{x_i}C \cdot {\rm mult}_{x_i}(\mu, 0)  \quad \Rightarrow \quad 2 \mu \beta \geq m := \mu \sum\limits_{i =1}^r {\rm mult}_{x_i}C
\end{eqnarray*}
This further implies
\begin{eqnarray*}
\frac{L\cdot C}{m} = \frac{a\beta + b\alpha}{m} \geq \frac{a}{2} + \frac{b}{2s_0} \geq \min\left\{a , \frac{b}{s_0} \right\}.
\end{eqnarray*}
\end{proof}

On hyperelliptic surface of type $6$ there are three reducible fibres of $\Psi$ each having multiplicities $3$, therefore, $t_0 \leq 3$.
\begin{proposition}\label{prop:multi-point-even-type-6}
Let $X$ be a hyperelliptic surface of type $6$ and let $L \equiv (a,b)$ be an ample line bundle on $X$. Let $x_1,x_2,\ldots, x_r$ be $r$ points on singular fibres of $\psi$ such that $l_B = s_0$ and $l_A = t_0 = 3$. 
Then
\[
\frac{2}{3}\min\left\{a, \frac{b}{s}\right\} \leq \s(X, L, x_1,x_2,\ldots, x_r) \leq \min\left\{a, \frac{b}{s_0}\right\}.
\]
\end{proposition}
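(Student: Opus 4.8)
The plan is to follow the template of Propositions~\ref{prop:multi-point-even-type-2} and~\ref{prop:multi-point-even-type-4} essentially verbatim, the only structural difference being that a type~$6$ surface carries three reducible fibres of $\Psi$, all of multiplicity $3=\mu$; hence each reduced singular fibre of $\Psi$ has class $(1,0)$ and each fibre of $\Phi$ has class $(0,3)$. It is exactly the factor $3$ appearing in both of these classes that will produce a gap between the upper and lower estimates, so I only expect to land on a two-sided bound rather than an exact value.

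For the upper bound I would write down the two obvious curves. Since $t_0=3$, some fibre $B_0$ of $\Phi$ passes through three of the $x_i$; it is smooth (all fibres of $\Phi$ are), has class $(0,3)$, and $L\cdot B_0=3a$, so its Seshadri ratio is $3a/3=a$. On the other side, the fibre of $\Psi$ containing $s_0$ of the points is one of the reduced multiplicity-$3$ fibres, a smooth elliptic curve of class $(1,0)$ with $L$-degree $b$, so its Seshadri ratio is $b/s_0$. This gives $\s(X,L,x_1,\dots,x_r)\le\min\{a,\,b/s_0\}$.

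For the lower bound, note first that any fibre through one of the points contributes a Seshadri ratio $\ge a\ge\min\{a,b/s_0\}$ (fibres of $\Phi$, using $t_0=3$) or $\ge b/s_0$ (reduced singular fibres of $\Psi$, using that $s_0$ is the maximal number of points on such a fibre), so only non-fibre curves need attention. Let $C\equiv(\alpha,\beta)$ with $\alpha,\beta>0$ be a reduced irreducible curve through the points and put $m=\sum_{i=1}^r{\rm mult}_{x_i}C$. Applying B\'ezout's theorem to $C$ against each of the three reduced singular fibres $\Gamma_1,\Gamma_2,\Gamma_3$ of $\Psi$ (each smooth of class $(1,0)$, so $C\cdot\Gamma_j=\beta$) and adding up --- using $l_A=3$, so these three fibres already contain all the $x_i$, together with the disjointness of distinct fibres --- yields $3\beta\ge m$. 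Similarly, $C$ against the $s_0$ fibres of $\Phi$ covering all the points (such a collection exists because $l_B=s_0$; each has class $(0,3)$, so $C\cdot(\text{fibre})=3\alpha$) yields $3s_0\alpha\ge m$. Consequently
\[
\frac{L\cdot C}{m}=\frac{a\beta+b\alpha}{m}\ \ge\ \frac{a}{3}+\frac{b}{3s_0}\ =\ \frac13\!\left(a+\frac{b}{s_0}\right)\ \ge\ \frac23\min\!\left\{a,\frac{b}{s_0}\right\},
\]
which is the asserted lower bound.

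I do not anticipate a serious obstacle; the one point that must be handled with care is that the singular fibres of $\Psi$ are non-reduced as scheme-theoretic fibres, so B\'ezout has to be applied to their reductions $\Gamma_j$ (with multiplicity one at each $x_i$) in order to obtain the correct inequality $3\beta\ge m$. The residual factor $2/3$ between the two bounds is simply the cost of this B\'ezout estimate: the three multiplicity-$3$ fibres of $\Psi$ only force $\beta\ge m/3$, while the fibre of $\Phi$ realizing the value $a$ meets each $\Gamma_j$ in three points, and I do not see how to close this gap without finer control over which curves can pass through the chosen configuration.
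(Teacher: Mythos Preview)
Your argument is correct and follows the paper's proof essentially line for line: both obtain the upper bound from the two obvious fibres and the lower bound from B\'ezout against the three singular fibres of $\Psi$ together with the $s_0$ fibres of $\Phi$ covering the points, arriving at $3\beta\ge m$ and $3s_0\alpha\ge m$ and hence $\frac{L\cdot C}{m}\ge\frac{a}{3}+\frac{b}{3s_0}\ge\frac{2}{3}\min\{a,b/s_0\}$. The only cosmetic difference is that you intersect $C$ with the \emph{reduced} singular fibres $\Gamma_j\equiv(1,0)$ directly, while the paper intersects with the scheme-theoretic fibre $(\mu,0)$ carrying multiplicity $\mu$ at each $x_i$; the two computations are equivalent, and your explicit check that the fibre curves themselves respect the lower bound is a small improvement in exposition over the paper.
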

\begin{proof}
The singular fibre $(\mu, 0)$ containing $s_0$ number of points give rise to the Seshadri ratio $\frac{3b}{3s_0} = \frac{b}{s_0}$, and the corresponding Seshadri ratio for the curve $(0, 3)$ containing $3$ points among $x_1,x_2,\ldots, x_r$ 
is $a$. Therefore 
\[
\s(X, L, x_1,x_2,\ldots, x_r) \leq \min\left\{a, \frac{b}{s_0}\right\}.
\]
To see the other inequality, let $C \equiv (\alpha, \beta)$ ($\alpha, \beta \neq 0$) be a reduced and irreducible curve passing through the points $x_1,x_2,\ldots, x_r$ with multiplicities ${\rm mult}_{x_1}C, {\rm mult}_{x_2}C, \ldots, {\rm mult}_{x_r}C$ respectively. 
Bézouts theorem applied to $C$ with fibres $(\mu, 0)$ and $(0, 2)$, we get
\begin{eqnarray*}
3\alpha = C\cdot (0, 3) \geq \sum {\rm mult}_{x_i}C \cdot {\rm mult}_{x_i}(0, 3) \quad \Rightarrow \quad  3s \alpha  \geq m := \sum\limits_{i =1}^r {\rm mult}_{x_i}C \\
\mu \beta = C \cdot (\mu, 0) \geq \sum {\rm mult}_{x_i}C \cdot {\rm mult}_{x_i}(\mu, 0)  \quad \Rightarrow \quad 3 \mu \beta \geq m := \mu \sum\limits_{i =1}^r {\rm mult}_{x_i}C
\end{eqnarray*}
This further implies
\begin{eqnarray*}
\frac{L\cdot C}{m} = \frac{a\beta + b\alpha}{m} \geq \frac{a}{3} + \frac{b}{3s_0} = \frac{2}{3}\left( \frac{a}{2} + \frac{b}{2s_0} \right) \geq \frac{2}{3}\min\left\{a , \frac{b}{s_0} \right\}.
\end{eqnarray*}
\end{proof}

\subsection{Seshadri constants on blow-up of hyperelliptic surfaces}

Let $X_r$ be the blow-up of a hyperelliptic surface $X$ along $r$ general points. In this sub-section, we first focus on computing and bounding the Seshadri constant of an ample line bundle 
at a general point of $X_r$, and then on computing the global Seshadri constants. 
\begin{lemma}\label{lemma; for smooth A}
Let $X_r$ denote the blow-up of hyperelliptic surface of odd type (i.e., of type $1,3,5,$ or $7$) at $r$ general points. Let $L \equiv (a, b, \textbf{d})$ be an ample line bundle on $X_r$ such that 
$a, b \geq 2kd$ for some $k \geq \sqrt{r/2}$. Then, for $x \in \textbf{Smooth } A$, we have
\[
\s(X_r, L, x) \leq \begin{cases}
                           \min\{\mu b, a\}, & \text{if } x \not\in \bigcup\limits_{j=1}^r (B-E_j) \\
                           \min\{\mu b, a-d \}, & \text{if } x \in B-E_j
                           \end{cases}
\]
with equality if either $a \leq \frac{\mu}{2\mu -1}b$ or $a \geq \mu(2\mu - 1)b$.
\end{lemma}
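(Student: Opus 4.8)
\emph{Overall plan.} The plan is to obtain the upper bound from two explicit curves through $x$, and then, in the ranges $a\le\frac{\mu}{2\mu-1}b$ and $a\ge\mu(2\mu-1)b$, to show that no other curve through $x$ gives a smaller Seshadri quotient. For the upper bound, let $G_x$ be the strict transform of the $\Psi$-fibre through $x$ and $F_x$ that of the $\Phi$-fibre through $x$. Since $x$ lies on a \textbf{Smooth }$A$ (and, as we may assume throughout, on one containing none of the $x_i$), $G_x\equiv(\mu,0,\textbf{0})$, so $L\cdot G_x=\mu b$ and $\mult_xG_x=1$. The curve $F_x$ is a smooth elliptic curve; if $x\notin\bigcup_j(B-E_j)$ it has class $(0,1,\textbf{0})$ and $L\cdot F_x=\frac{\gamma}{\mu}a=a$ (using $\gamma=\mu$ on odd types), whereas if $x\in B-E_j$ then $F_x=B-E_j\equiv(0,1,1_j)$ and $L\cdot F_x=a-d$; in either case $\mult_xF_x=1$. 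Taking the smaller of the two quotients $\mu b$ and $a$ (resp.\ $a-d$) yields the asserted inequality.

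\emph{Reduction for the lower bound.} Assume now $a\le\frac{\mu}{2\mu-1}b$ or $a\ge\mu(2\mu-1)b$, and write $\lambda$ for the claimed value of $\s(X_r,L,x)$ (so $\lambda=a$, resp.\ $a-d$, in the first range, and $\lambda=\mu b$ in the second). Let $C\equiv(\alpha,\beta,\delta_1,\dots,\delta_r)$ be an arbitrary reduced irreducible curve through $x$ and put $m=\mult_xC$; we must show $L\cdot C\ge m\lambda$. If $\alpha=0$ or $\beta=0$, then $C$ is a component of a fibre, hence (as $x$ sits on a smooth $\Psi$-fibre) equals $G_x$ or $F_x$, and $L\cdot C/m\in\{\mu b,\,a,\,a-d\}\ge\lambda$. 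Otherwise $\alpha,\beta\ge1$, all $\delta_i\ge0$, and $C$ is distinct from every $A-E_i$ and $B-E_i$; intersecting $C$ with $G_x$, $F_x$ and with $A-E_i\equiv(\mu,0,1_i)$, $B-E_i\equiv(0,1,1_i)$ and using that distinct irreducible curves meet non-negatively, Bézout's theorem gives
\[
m\le\min\{\alpha,\mu\beta\}\quad(\text{with }\alpha\text{ replaced by }\alpha-\delta_j\text{ when }x\in B-E_j),\qquad \delta_i\le\min\{\alpha,\mu\beta\}\ \ (1\le i\le r).
\]

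\emph{Bounding the defect and closing.} Writing $L\cdot C=a\beta+b\alpha-d\sum_i\delta_i$, the point is to keep $d\sum_i\delta_i$ under control. I would bound $\sum_i\delta_i$ by intersecting $C$ with an auxiliary nef class of the form $(N,N,\textbf{1})$ with $N\approx\sqrt{r/2}$ — nef by the ampleness results of Section~\ref{ampleness on the blow-up of hyperelliptic surfaces}, in particular Lemma~\ref{ampleness:lemma} — which yields $\sum_i\delta_i\le N(\alpha+\beta)$, and sharpen this when needed by the arithmetic-genus estimate $\sum_i\delta_i(\delta_i-1)\le 2\alpha\beta+2$ (valid because $K_X\equiv 0$ and $\chi(\str_X)=0$ on a hyperelliptic surface). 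Since the hypothesis ``$a,b\ge 2kd$ for some $k\ge\sqrt{r/2}$'' is exactly $d\le\min\{a,b\}/\sqrt{2r}$, this bounds $d\sum_i\delta_i$ by a quantity of the size of $\tfrac12\min\{a,b\}(\alpha+\beta)$, the rounding discrepancy in $N$ and the high-multiplicity points being absorbed via the genus estimate. Hence $L\cdot C\gtrsim a\beta+b\alpha-\tfrac12\min\{a,b\}(\alpha+\beta)$, and one now splits into the two ranges of $a/b$ and, inside each, into the subcases $\alpha\le\mu\beta$ and $\alpha\ge\mu\beta$ in order to substitute $m\le\min\{\alpha,\mu\beta\}$. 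In every branch the inequality $L\cdot C\ge m\lambda$ collapses, after regrouping $L\cdot C$ as $a\beta+(\text{positive coefficient})\cdot\alpha-\tfrac12\min\{a,b\}(\alpha+\beta)$ and applying AM--GM to the first two terms, to an elementary scalar inequality in $a/b$ (for instance $a\le\tfrac23 b$, or $a\ge(\mu^2+\tfrac14)b$, or $\mu^{3/2}\ge1$), each of which is guaranteed precisely by $a\le\frac{\mu}{2\mu-1}b$, resp.\ $a\ge\mu(2\mu-1)b$, together with $\mu\in\{2,3,4,6\}$ for the odd types.

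\emph{Main obstacle.} The delicate part is this last chain of estimates. One must retain the full $a\beta$ term so that the bound does not degenerate when $\alpha/\beta$ is large, absorb the rounding error in $N\approx\sqrt{r/2}$ and the additive $+2$ in the genus bound into the slack left by the thresholds, and, in the case $x\in B-E_j$, track the two compensating corrections $a\mapsto a-d$ and $\alpha\mapsto\alpha-\delta_j$ throughout. Verifying that $\frac{\mu}{2\mu-1}$ and $\mu(2\mu-1)$ are exactly the cut-offs making every branch close, for each odd-type value of $\mu$, is where the real computation lies.
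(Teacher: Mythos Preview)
Your upper-bound argument and your key device for the lower bound---intersecting $C$ with the auxiliary nef class $(k,k,\textbf{1})$ to control $\sum_i\delta_i$---are exactly what the paper does. However, your execution of the lower bound is considerably more convoluted than necessary, and several of the tools you bring in (the arithmetic-genus estimate, AM--GM, the dichotomy $\alpha\lessgtr\mu\beta$, the worry about ``rounding discrepancy in $N$'') are not needed at all.

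The paper's argument is simply this. From $a,b\ge 2kd$ one has
\[
a\beta+b\alpha-d\textstyle\sum_i\delta_i
\;=\;\tfrac{a}{2}\beta+\tfrac{b}{2}\alpha+\bigl(\tfrac{a}{2}\beta+\tfrac{b}{2}\alpha-d\textstyle\sum_i\delta_i\bigr)
\;\ge\;\tfrac{a}{2}\beta+\tfrac{b}{2}\alpha+d\cdot(k,k,\textbf{1})\cdot C
\;\ge\;\tfrac{a}{2}\beta+\tfrac{b}{2}\alpha,
\]
the last step by nefness of $(k,k,\textbf{1})$. Now use \emph{both} B\'ezout inequalities $\mu\beta\ge m$ and $\alpha\ge m$ \emph{simultaneously} (no case split) to get the single scalar bound
\[
\frac{L\cdot C}{m}\;\ge\;\frac{a}{2\mu}+\frac{b}{2}.
\]
Then one checks directly that $\frac{a}{2\mu}+\frac{b}{2}\ge a$ is equivalent to $a\le\frac{\mu}{2\mu-1}b$, and $\frac{a}{2\mu}+\frac{b}{2}\ge\mu b$ is equivalent to $a\ge\mu(2\mu-1)b$; this is precisely where the thresholds come from. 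The case $x\in B-E_j$ only changes the target from $a$ to $a-d$, which makes the first inequality easier.

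So there is no ``real computation'' hiding in the end-game, no rounding to absorb (just use the $k$ from the hypothesis, not $\sqrt{r/2}$), and no genus bound is ever invoked. Your plan would eventually close, but you have substantially overestimated the difficulty of the final step.
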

\begin{proof}
Since $x$ lies on a smooth fibre isomorphic to $A$, which we denote as $(\mu, 0, 0)$, the Seshadri ratio corresponding to this curve is 
\[
\frac{L \cdot (\mu, 0, 0)}{1} = \mu b.
\]
Now, if $x \in B-E_j$ ($\equiv (0, 1, 1_j)$) be on a strict transform of fibre containing one of the blown-up point $x_j$, the corresponding Seshadri ratio is 
\[
\frac{L \cdot (0, 1, 1_j)}{1} = a - d.
\]
Otherwise, there is always a copy of $B\equiv (0,1,0)$ passing through $x$, and the corresponding Seshadri ratio is $a$. Therefore, the inequality in the statement is clear. 
To see the equality, we consider the two cases: \\
\textbf{Case 1:} $x \not\in \bigcup_{j=1}^r (B-E_j)$\\
Let $C \equiv (\alpha, \beta, \delta_1, \ldots, \delta_r)$ be a curve different from the curves considered above and passing through $x$ with multiplicity $m$. 
Then by B\'ezout's theorem we have $C \cdot (\mu, 0, 0) \geq m$ and $C \cdot (0,1,0) \geq m$. 
This gives $\mu \beta \geq m$ and $\alpha \geq m$, respectively. Using the fact that $a, b \geq 2kd$ for some $k \geq \sqrt{\frac{r}{2}}$, we get
\begin{eqnarray}\label{equality; for seshadri ratio}
\frac{L \cdot C}{m} &=& \frac{a\beta + b \alpha - d\sum \delta_i}{m} \\ \label{inequality; for general x}
                               &\geq& \frac{(a/2)\beta + (b/2) \alpha + d(k, k, \textbf{1})\cdot C}{m} \\ \label{inequality; for smooth x}
                               &\geq& \frac{a}{2\mu} + \frac{b}{2}.
\end{eqnarray}
The last inequality follows since $k$ is chosen so that $(k,k, \textbf{1})$ is a nef line bundle on $X_r$. 

Now, if $a \leq \frac{\mu}{2\mu - 1}b$, then $\min\{\mu b, a\} = a$. Furthermore, following the inequality \ref{inequality; for smooth x}, we note
\begin{eqnarray*}
\frac{a}{2\mu} + \frac{b}{2} \geq a \quad \Leftrightarrow \quad \frac{b}{2} \geq \frac{2\mu - 1}{2\mu}a \quad \Leftrightarrow \quad b \ge \frac{2\mu - 1}{\mu}a.
\end{eqnarray*}
Therefore, we obtain $\varepsilon(X_r, L, x) = a$. 

On the other hand, if $a \geq \mu(2\mu - 1)b$ then $\min\{\mu b, a\} = \mu b$ and agin following the inequality \ref{inequality; for smooth x} we have
\begin{eqnarray*}
\frac{L \cdot C}{m}  \geq \frac{a}{2 \mu} + \frac{b}{2} \geq \mu b &\Leftrightarrow& a + \mu b \geq 2\mu^2 b \\
                                                                                        &\Leftrightarrow& a \ge \mu(2\mu - 1)b,
\end{eqnarray*}
which holds by assumption. Therefore, we obtain $\varepsilon(X_r, L, x) = \mu b$. \\
\textbf{Case 2:} $x \in B-E_j$\\
As in the case 1, using B\'ezout's theorem, we have $C \cdot (\mu, 0, 0) \geq m$ and $C \cdot (0,1, 1_j) \geq m$. These gives $\mu \beta \geq m$ and $\alpha \geq m + d$, respectively.
Now, if $a \leq \frac{\mu}{2 \mu -1}b$, then $\min\{\mu b, a - d \} = a-d$. Continuing from inequality \ref{inequality; for smooth x}, we see that 
\begin{eqnarray}\nonumber
\frac{L \cdot C}{m}  \geq \frac{a}{2 \mu} + \frac{b}{2} \geq a - d &\Leftrightarrow& \frac{b}{2} + d \geq \frac{2 \mu - 1}{2 \mu}a  \\ \nonumber
                                                                                        &\Leftrightarrow& b + 2d \ge \frac{2 \mu - 1}{\mu}a \\ \label{eqn; smooth A and min is a-d}
                                                                                        &\Leftrightarrow& \frac{\mu}{2 \mu - 1}b + \frac{2 \mu}{2 \mu - 1}d \geq a,
\end{eqnarray}
which holds if $a \leq \frac{\mu}{2 \mu - 1}b$. Therefore, we get $\varepsilon(X_r, L, x) = a-d$.

On the other hand, if $a \ge \mu(2\mu - 1)b$, we have $a - d \ge \mu(2\mu - 1)b - d > \mu b$, and as in case 1, we get $\frac{L\cdot C}{m} \geq \mu b$. 
This completes the proof of the lemma. 
\end{proof}

\begin{lemma}\label{lemma; for smooth A-Ej}
Let $X_r$ denote the blow-up of hyperelliptic surface of odd type at $r$ general points. Let $L \equiv (a, b, \textbf{d})$ be an ample line bundle on $X_r$ such that 
$a, b \geq 2kd$ for $k \geq \sqrt{r/2}$. Then, for $x \in A-E_i$, we have
\[
\s(X_r, L, x) \leq \begin{cases}
                           \min\{\mu b - d, a\}, & \text{if } x \not\in \bigcup\limits_j (B-E_j) \\
                           \min\{\mu b - d, a-d \}, & \text{if } x \in B-E_j
                           \end{cases}
\]
with equality if either $a \leq \frac{\mu}{2\mu - 1}b$ or $a \geq \mu(2\mu - 1)b - 2\mu d$.
\end{lemma}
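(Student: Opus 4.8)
The plan is to mirror the proof of Lemma \ref{lemma; for smooth A}, the only structural change being that the distinguished fibre of $\Psi$ through $x$ is now the strict transform $A-E_i\equiv(\mu,0,1_i)$ rather than a smooth fibre $(\mu,0,0)$, and that the fibre of $\Phi$ through $x$ may be $B-E_j$ rather than a plain $B$. For the upper bound, note that $A-E_i$ is the strict transform of a smooth elliptic fibre of $\Psi$ blown up at the one point $x_i$, hence smooth, so it passes through $x$ with multiplicity $1$; since $L\cdot(A-E_i)=\mu b-d$, its Seshadri ratio is $\mu b-d$. If $x\notin\bigcup_j(B-E_j)$, the fibre of $\Phi$ through $x$ is a copy of $B\equiv(0,1,0)$ with $L\cdot B=a$; if $x\in B-E_j$ for some $j$, it is $B-E_j\equiv(0,1,1_j)$ with $L\cdot(B-E_j)=a-d$. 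This gives the two stated upper bounds.

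For the reverse inequality, I would assume $a\le\tfrac{\mu}{2\mu-1}b$ or $a\ge\mu(2\mu-1)b-2\mu d$ and take a reduced irreducible curve $C\equiv(\alpha,\beta,\delta_1,\ldots,\delta_r)$ through $x$ of multiplicity $m$, different from $A-E_i$, from $B$ (resp. $B-E_j$), and from every $E_k$. B\'ezout's theorem applied to $C$ with $A-E_i$ and with $B$ (resp. $B-E_j$) gives $\mu\beta-\delta_i\ge m$ and $\alpha\ge m$ (resp. $\alpha-\delta_j\ge m$), hence $\mu\beta\ge m$ and $\alpha\ge m$. Using $a,b\ge 2kd$ to split $a\beta\ge\tfrac a2\beta+kd\beta$ and $b\alpha\ge\tfrac b2\alpha+kd\alpha$, together with the nefness of $(k,k,\textbf{1})$ on $X_r$ (the same fact used in the proof of Lemma \ref{lemma; for smooth A}), one obtains
\[
\frac{L\cdot C}{m}=\frac{a\beta+b\alpha-d\sum_i\delta_i}{m}\ \ge\ \frac{\tfrac a2\beta+\tfrac b2\alpha+d\,(k,k,\textbf{1})\cdot C}{m}\ \ge\ \frac{a}{2\mu}+\frac b2.
\]
It then suffices to check, by elementary rearrangement, that $\tfrac{a}{2\mu}+\tfrac b2\ge a\ (\ge a-d)$ when $a\le\tfrac{\mu}{2\mu-1}b$ and that $\tfrac{a}{2\mu}+\tfrac b2\ge\mu b-d$ when $a\ge\mu(2\mu-1)b-2\mu d$, and that in each of these regimes $\min\{\mu b-d,a\}$ (resp. $\min\{\mu b-d,a-d\}$) equals the value just bounded below, for which one uses $b>d$ and $\mu\ge 2$. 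Combined with the upper bound, this yields the claimed equalities.

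I do not expect a genuine obstacle: the argument is a routine variant of Lemma \ref{lemma; for smooth A}, with the extra $-d$'s produced by the exceptional components of $A-E_i$ (and of $B-E_j$) absorbed precisely by relaxing the large-$a$ threshold from $\mu(2\mu-1)b$ to $\mu(2\mu-1)b-2\mu d$. The only point needing care is the single point $(A-E_i)\cap E_i$, through which $E_i$ itself passes with Seshadri ratio $d<\min\{\mu b-d,a-d\}$; as in the preceding lemma, the statement is to be read for $x\in A-E_i$ lying off $E_i$.
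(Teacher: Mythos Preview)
Your proposal is correct and follows essentially the same route as the paper's proof: both obtain the upper bound from the Seshadri ratios of $A-E_i$ and of $B$ (resp.\ $B-E_j$), then for any other irreducible curve $C$ use B\'ezout together with the nefness of $(k,k,\mathbf{1})$ to reach $\tfrac{L\cdot C}{m}\ge \tfrac{a}{2\mu}+\tfrac{b}{2}$, and finally verify by the same elementary rearrangements that this quantity dominates the relevant minimum in each of the two regimes. Your explicit remark that the point $(A-E_i)\cap E_i$ must be excluded (since $E_i$ gives Seshadri ratio $d$ there) is a welcome clarification that the paper leaves implicit.
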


\begin{proof}
Since $x \in A - E_i \equiv (\mu, 0, 1_i)$, which is smooth, the Seshadri ratio corresponding to it is $\mu b - d$. As noted in the proof of Lemma \ref{lemma; for smooth A}, 
$x$ either lies on a copy of $B$ or on $B-E_j$ for some $j$, and their corresponding Seshadri ratio's are $a$ and $a-d$, respectively. Therefore, the inequality in the lemma 
is clear. To adress the remaining part, we proceed as in Lemma \ref{lemma; for smooth A}. 

Let $C \equiv (\alpha, \beta, \delta_1, \ldots, \delta_r)$ be a curve, different from the strict transforms of fibres of $\Phi$ and $\Psi$, passing through $x$ with multiplicity $m$. 
By B\'ezout's theorem, we get $C \cdot (\mu, 0, 1_i) \geq m$ and $C \cdot (0,1,0) \geq m$ (or $C \cdot (0,1,1_j) \geq m$), 
which gives $\mu\beta \ge m + d$ and $\alpha \ge m$ (or $\alpha \ge m + d$), respectively. Therefore, as in Lemma \ref{lemma; for smooth A}, we have the inequality \ref{inequality; for smooth x}. 
We now consider the following two cases. 

\textbf{Case 1:} $x \not\in \bigcup_{j=1}^r (B-E_j)$ \\
Note that if $a \leq \frac{\mu}{2\mu -1}b$, then $a + d < 2a < (2\mu -1)a \leq \mu b$. Therefore, $\min\{\mu b - d, a\} = a$.   
Now, following the inequality \ref{inequality; for smooth x} and proceeding as in the Case 1 of Lemma \ref{lemma; for smooth A}, we note:
\begin{eqnarray*}
\frac{L \cdot C}{m}  \geq \frac{a}{2 \mu} + \frac{b}{2} \geq a &\Leftrightarrow& \frac{b}{2}  \geq \frac{2\mu - 1}{2\mu}a \\                                                                                    
                                                                                        &\Leftrightarrow& \frac{\mu}{2\mu - 1}b \geq a.
\end{eqnarray*}
On the other hand, if $a \ge \mu(2\mu -1)b - 2\mu d$, then $\min\{\mu b - d, a\} = \mu b - d$ and 
\begin{eqnarray*}
\frac{L \cdot C}{m}  \geq \frac{a}{2\mu} + \frac{b}{2} \geq \mu b - d &\Leftrightarrow& a  \geq \mu(2\mu - 1)b - 2\mu d.   \label{eqn; smooth A-Ei and min is 2b - d}                                                                                 
%
\end{eqnarray*}
Therefore, we get the equality as stated i.e., $\varepsilon(X_r, L,x) = \min\{\mu b - d, a\}$.

\textbf{Case 2:} $x \in B-E_j$ \\
In this case, we see that $\min\{\mu b - d, a - d\} = a - d$ if $a \le \frac{\mu}{2\mu - 1}b$. Therefore, $\frac{L \cdot C}{m} \geq a - d$ follows from 
the characterisation in \ref{eqn; smooth A and min is a-d} of Lemma \ref{lemma; for smooth A}. On the other hand, if $a \ge \mu(2\mu - 1)b - 2\mu d$, we get $\min\{\mu b - d, a - d\} 
= \mu b - d$, and as seen in the characterisation \ref{eqn; smooth A-Ei and min is 2b - d} of the previous lemma, we have $\frac{L \cdot C}{m} \geq \mu b - d$. 
Thus, we get the equality in this case too, i.e., $\varepsilon(X_r, L, x) = \min\{\mu b - d, a-d\}$.
\end{proof}

\begin{lemma}\label{lemma; for singular A}
Let $X_r$ denote the blow-up of hyperelliptic surface of odd type at $r$ general points. Let $L \equiv (a, b, \textbf{d})$ be an ample line bundle on $X_r$ such that 
$a, b \geq 2kd$ for $k \geq \sqrt{r/2}$. Then, for $x \in \textbf{Singular } A$, we have
\[
\s(X_r, L, x) = \begin{cases}
                           \min\{b, a\} ,& \text{if } x \not\in \bigcup_{j=1}^r (B-E_j) \\
                           \min\{a-d, b\}, & \text{if } x \in B-E_j.
                           \end{cases}
\]
\end{lemma}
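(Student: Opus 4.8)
The plan is to follow the template already used in Lemmas \ref{lemma; for smooth A} and \ref{lemma; for smooth A-Ej}, but to exploit the fact that on a \textbf{Singular} $A$ of maximal multiplicity $\mu$, the relevant curve through $x$ is the reduced irreducible elliptic fibre $F$ with $\mu F \equiv (\mu,0,0)$, so that $F \equiv (1,0,0)$ in $\mathrm{Num}(X_r)$, giving Seshadri ratio $L\cdot F = b$ rather than $\mu b$. First I would record the upper bound: the reduced singular fibre $F$ through $x$ contributes the ratio $L\cdot F/1 = b$, and — exactly as in the previous two lemmas — either a copy of $B\equiv(0,1,0)$ or a strict transform $B-E_j\equiv(0,1,1_j)$ passes through $x$, contributing $a$ or $a-d$ respectively. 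Hence $\s(X_r,L,x)\le \min\{b,a\}$ in the first case and $\le\min\{b,a-d\}$ in the second.

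For the reverse inequality, let $C\equiv(\alpha,\beta,\delta_1,\dots,\delta_r)$ be a reduced irreducible curve through $x$ with $\mathrm{mult}_x C = m$, distinct from the fibres and exceptional divisors. Bézout applied to $C$ and the reduced fibre $F\equiv(1,0,0)$ gives $\beta = C\cdot F \ge m$, and Bézout applied to $C$ and $B$ (resp.\ $B-E_j$) gives $\alpha\ge m$ (resp.\ $\alpha\ge m+d$). Then, using that $(k,k,\mathbf{1})$ is nef for $k\ge\sqrt{r/2}$ and $a,b\ge 2kd$, the same splitting as in \eqref{equality; for seshadri ratio}–\eqref{inequality; for smooth x} yields
\[
\frac{L\cdot C}{m} \ \ge\ \frac{(a/2)\beta + (b/2)\alpha + d\,(k,k,\mathbf 1)\cdot C}{m}\ \ge\ \frac{b}{2}+\frac{a}{2},
\]
where the key improvement over Lemma \ref{lemma; for smooth A} is that the coefficient of $\beta$ is no longer divided by $\mu$, because $\beta\ge m$ directly. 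In Case 1 ($x\notin\bigcup_j(B-E_j)$), $\tfrac12(a+b)\ge\min\{a,b\}$ is immediate, so $\s(X_r,L,x)=\min\{a,b\}$ with no side condition on $a,b$. In Case 2 ($x\in B-E_j$), the refined bound $\alpha\ge m+d$ gives $L\cdot C/m \ge \tfrac12 b + \tfrac12(a) + (d/m)\cdot(\text{nonneg})$; more carefully, replacing $\alpha\ge m$ by $\alpha\ge m+d$ in the estimate yields $L\cdot C/m \ge \tfrac{b}{2}+\tfrac{a}{2} \ge \min\{b, a-d\}$ provided one checks $\tfrac{b}{2}+\tfrac{a}{2}\ge a-d$ when $a-d$ is the minimum (i.e.\ $b+2d\ge a$, which holds when $a-d\le b$) and $\tfrac{b}{2}+\tfrac{a}{2}\ge b$ when $b$ is the minimum (i.e.\ $a\ge b$, which is the defining case). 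Either way the lower bound matches the upper bound, giving equality.

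The one point requiring care — and the main (minor) obstacle — is Case 2: one must verify that no extra hypothesis of the form $a\le\frac{\mu}{2\mu-1}b$ or $a\ge\mu(2\mu-1)b$ is needed here, unlike in the two preceding lemmas. This is exactly because the gain from $\mu F\equiv(\mu,0,0)$ being reducible removes the factor $\mu$ in the denominator of the $\beta$-term, so the estimate $\tfrac12(a+b)$ dominates $\min\{a,b\}$ (resp.\ $\min\{a-d,b\}$) unconditionally; I would spell out the two trivial inequalities $\tfrac12(a+b)\ge\min\{a,b\}$ and, in Case 2, $\tfrac12(a+b)\ge\min\{a-d,b\}$ (using $a,b>0$ and $d\ge0$), and invoke that $C$ being distinct from $E_i$ forces $L\cdot C>0$ so the infimum is genuinely attained by the fibre. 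This completes the argument.
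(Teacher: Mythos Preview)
Your proposal is correct and follows essentially the same route as the paper's own proof. The only cosmetic difference is that you intersect $C$ with the reduced singular fibre $F\equiv(1,0,0)$ to obtain $\beta\ge m$, whereas the paper intersects $C$ with the full fibre class $(\mu,0,0)$ and uses that the singular fibre has multiplicity $\mu$ at $x$ to get $C\cdot(\mu,0,0)\ge \mu m$, i.e.\ $\beta\ge m$; the rest of the argument---the splitting via nefness of $(k,k,\mathbf 1)$ yielding $\frac{L\cdot C}{m}\ge\frac{a}{2}+\frac{b}{2}$ and the elementary check that this dominates $\min\{a,b\}$ (Case~1) or $\min\{a-d,b\}$ (Case~2)---matches the paper verbatim.
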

\begin{proof}
Since $x \in \textbf{Singular } A$ and, by definition, $\textbf{Singular } A$ represents a singular fibre of $\Psi$ with the highest multiplicity $\mu$, the corresponding Seshadri ratio 
is $b$. Depending on whether the point $x$ lies on $B-E_j$ for some $j$ or directly on $B$, the corresponding Seshadri ratio is either $a-d$ or $a$, 
respectively. Consequently, $\varepsilon(X_r, L,x)$ is bounded above by $\min\{a-d, b\}$ in the former case and $\min\{a, b\}$ in the latter. 

To see the equality, let $C \equiv (\alpha, \beta, \delta_1, \ldots, \delta_r)$ be a reduced and irreducible curve (different from the strict transforms of fibres of $\Phi$ and $\Psi$) passing through $x$ with multiplicity $m$. 
By B\'ezout's theorem, we have $C \cdot (\mu, 0, 0) \geq \mu m$ and $C \cdot (0,1,0) \geq m$ or $C \cdot (0,1,1_j) \geq m$. This gives $\beta \ge m$ and $\alpha \geq m$ or $\alpha -d \geq m$, respectively. 
Therefore, the inequality \ref{inequality; for general x} leads to the following
\begin{eqnarray}\label{inequality; for singular x}
\frac{L \cdot C}{m} &\geq& \frac{a}{2} + \frac{b}{2} \\ \nonumber
                              &\geq& \min\{a, b\}.
\end{eqnarray}
Thus, if $x \not\in \bigcup_{j=1}^r (B-E_j)$, we get $\varepsilon(X_r, L, x) = \min\{a, b\}$. \\
Now, if $x \in B-E_j$ for some $j$, and if $\min\{a-d, b\} = b$, then we have 
\[
\frac{a}{2} - \frac{d}{2} \geq \frac{b}{2} \Rightarrow \frac{a}{2} + \frac{b}{2} \geq b + \frac{d}{2} > b.
\]
Therefore, using inequality \ref{inequality; for singular x}, we get the equality i.e., $\varepsilon(X_r, L,x) = b$. On the other hand if $\min\{a-d, b\} = a - d$, using 
inequality \ref{inequality; for singular x}, we again note that 
\[
\frac{L \cdot C}{m} \ge \frac{a}{2} + \frac{b}{2} > \frac{a-d}{2} + \frac{a-d}{2} = a - d.
\]
Hence $\varepsilon(X_r,L,x) = \min\{a-d, b\}$. This completes the proof of the lemma. 
\end{proof}

We now use the above lemmas to obtain the following.
\begin{theorem}\label{Thm-type1-general-point}
	Let $X$ be a hyperelliptic surface of odd type, and let $\pi_r : X_r \longrightarrow X$ denote the blow-up of $X$ at $r$ general points with exceptional divisors $E_i$.
	Let $L = (a, b, \textbf{d})$ be an ample line bundle on $X_r$ satisfying either $a \geq \mu b$ or $\frac{2\mu - 1}{\mu}a < b$ and $\frac{a}{d}, \frac{b}{d} \geq 2k$, where $k \geq \sqrt{\frac{r}{2}}$ be 
	a positive integer. Let $x \in X_r$ be a very general point. Then, we have 
	\[
	\min\{a,b\} \leq \varepsilon(X_r, L,  x) \leq \min\{a,\mu b\}.
	\]
\end{theorem}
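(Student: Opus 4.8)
The plan is to derive the two inequalities from the three case-lemmas (Lemmas \ref{lemma; for smooth A}, \ref{lemma; for smooth A-Ej}, \ref{lemma; for singular A}) together with the observation that a very general point $x\in X_r$ sits on infinitely many fibres of $\Phi$ (copies of $B$) but can be arranged to avoid the strict transforms $B-E_j$ and the special fibres. The key point is that the single-point Seshadri constant at a very general point equals the infimum over the ``generic'' configuration, so we may assume $x\notin\bigcup_j(B-E_j)$, $x\notin\bigcup_j(A-E_j)$, and $x$ does not lie on any \textbf{Singular }$A$.

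For the upper bound: since $x$ lies on a copy of $\textbf{Smooth }A\equiv(\mu,0,0)$ and on a copy of $B\equiv(0,1,0)$, the curves $(\mu,0,0)$ and $(0,1,0)$ give Seshadri ratios $\mu b$ and $a$ respectively, whence $\varepsilon(X_r,L,x)\le\min\{a,\mu b\}$. This is precisely the inequality already recorded (without the equality clause) in Lemma \ref{lemma; for smooth A}, Case 1, so I would simply cite that.

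For the lower bound $\varepsilon(X_r,L,x)\ge\min\{a,b\}$: let $C\equiv(\alpha,\beta,\delta_1,\dots,\delta_r)$ be any reduced irreducible curve through $x$ with multiplicity $m$, different from the strict transforms of fibres and the exceptional divisors. Bézout applied to $C$ against $(\mu,0,0)$ and $(0,1,0)$ gives $\mu\beta\ge m$ and $\alpha\ge m$. Using the hypothesis $a,b\ge 2kd$ with $(k,k,\textbf{1})$ nef (the key input from Lemma \ref{ampleness:lemma}, exactly as in the displayed chain \eqref{equality; for seshadri ratio}--\eqref{inequality; for smooth x}), I split off $\tfrac{d}{1}(k,k,\textbf1)\cdot C\ge 0$ to get
\[
\frac{L\cdot C}{m}\ \ge\ \frac{(a/2)\beta+(b/2)\alpha}{m}\ \ge\ \frac{a}{2\mu}+\frac{b}{2}.
\]
Now the hypothesis forces one of the two regimes. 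If $b>\tfrac{2\mu-1}{\mu}a$, then $\tfrac{a}{2\mu}+\tfrac{b}{2}>\tfrac{a}{2\mu}+\tfrac{2\mu-1}{2\mu}a=a\ge\min\{a,b\}$; moreover in this regime $a<\mu b$ so $\min\{a,b\}=a$ unless $b\le a$, but $b>\tfrac{2\mu-1}{\mu}a\ge a$ forces $\min\{a,b\}=a$, and we are done. If instead $a\ge\mu b$, then $\tfrac{a}{2\mu}+\tfrac{b}{2}\ge\tfrac{b}{2}+\tfrac{b}{2}=b=\min\{a,b\}$ (since $a\ge\mu b\ge b$), again done. Because this bound holds for every competing curve, and the fibre curves through $x$ give ratios $\mu b\ge b$ and $a\ge b$ (resp.\ $a$), we conclude $\varepsilon(X_r,L,x)\ge\min\{a,b\}$.

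The main obstacle I anticipate is the bookkeeping of the hypothesis ``$a\ge\mu b$ or $\tfrac{2\mu-1}{\mu}a<b$'': one has to check that in each regime the bound $\tfrac{a}{2\mu}+\tfrac{b}{2}$ genuinely dominates $\min\{a,b\}$ and to correctly identify which of $a,b$ is the minimum, and to confirm that a very general point really does avoid all the ``bad'' loci (finitely many $A-E_j$, $B-E_j$, and \textbf{Singular }$A$) so that the clean Case-1 bounds of the three lemmas apply — everything else is the same Bézout-plus-nef-splitting computation already carried out in the lemmas.
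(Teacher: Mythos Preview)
Your proposal is correct and follows essentially the same route as the paper: assume the very general point avoids the finitely many strict transforms $A-E_j$, $B-E_j$ and the singular fibres, read off the upper bound $\min\{a,\mu b\}$ from the fibre curves (Lemma \ref{lemma; for smooth A}, Case 1), and for the lower bound run the B\'ezout-plus-nef-splitting computation \eqref{equality; for seshadri ratio}--\eqref{inequality; for smooth x} to get $\frac{L\cdot C}{m}\ge \frac{a}{2\mu}+\frac{b}{2}$, then verify case by case that the hypothesis $a\ge\mu b$ or $b>\frac{2\mu-1}{\mu}a$ forces $\frac{a}{2\mu}+\frac{b}{2}\ge\min\{a,b\}$. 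Your case analysis is in fact slightly more explicit than the paper's, which simply asserts the last inequality.
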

\begin{proof}
Since $x$ is a very general point of $X_r$, we may assume that $x$ lies on a smooth fibre $A$ and not on the fibre $A-E_j$ for any $j$. From Lemma \ref{lemma; for smooth A}, 
$\varepsilon(L, X_r, x) \leq \min\{a,\mu b\}$ follows. Therefore, it only remain to show the lower bound. 

To see this, let $C \equiv (\alpha, \beta, \delta_1, \ldots, \delta_r)$ be a reduced and irreducible 
curve (different from the strict transforms of fibres of $\Phi$ and $\Psi$) passing through $x$ with multiplicity $m$. Using the B\'ezout's theorem, we see that 
\begin{eqnarray*}
C \cdot (\mu, 0, 0) \geq m  \quad \Rightarrow \quad \mu \beta &\geq& m \quad \Rightarrow \quad \frac{a \beta}{2} \geq \frac{a}{2\mu}m.
\end{eqnarray*}
Continuing the calculation from inequality \ref{inequality; for smooth x}, we get
\begin{eqnarray*}
\frac{L\cdot C}{m} \geq \frac{a}{2\mu} + \frac{b}{2} \geq \min\{a, b\}.
\end{eqnarray*}
Where the last inequality follows from the assumption $a \geq \mu b$ or $\frac{2\mu - 1}{\mu}a \leq b$.
\end{proof}

\begin{corollary}\label{Cor:general-point}
Continuing with the notation of the previous theorem, let $L = (a, b, \textbf{d})$ be an ample line bundle on $X_r$ such 
that $a/d, b/d \geq 2k$, where $k \geq \sqrt{r/2}$ be a positive integer. Then 
\begin{itemize}
\item[(1)] $\s(X_r, L, 1) = a$, if $a \leq \frac{\mu}{2\mu - 1}b$\\
\item[(2)] $\s(X_r, L, 1) \in [b, \mu b]$, if $a \geq \mu b$.
\end{itemize}
\end{corollary}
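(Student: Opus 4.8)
The plan is to read off both parts from Lemma~\ref{lemma; for smooth A} and Theorem~\ref{Thm-type1-general-point}. The common starting observation is that a very general point $x \in X_r$ lies on a \textbf{Smooth } $A$ — that is, on a smooth fibre of $\Psi$ distinct from each $A - E_j$, since only finitely many fibres of $\Psi$ are singular and only finitely many are of the form $A - E_j$ — and that such $x$ also avoids all of the finitely many curves $B - E_j$. Finally, $\s(X_r, L, 1) = \s(X_r, L, x)$ by definition of the very general single-point Seshadri constant. Note also that the hypothesis $a/d, b/d \ge 2k$ is exactly $a, b \ge 2kd$ with $k \ge \sqrt{r/2}$, so both Lemma~\ref{lemma; for smooth A} and Theorem~\ref{Thm-type1-general-point} apply verbatim.

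For part (1), I would assume $a \le \frac{\mu}{2\mu - 1}b$. Since $\mu \ge 2$ for every hyperelliptic surface we have $\frac{\mu}{2\mu - 1} \le \mu$, hence $a \le \mu b$ and $\min\{\mu b, a\} = a$. Then I apply the first case of Lemma~\ref{lemma; for smooth A} to $x$ (valid since $x \in \textbf{Smooth } A$ and $x \notin \bigcup_j (B - E_j)$); its equality clause under the hypothesis $a \le \frac{\mu}{2\mu - 1}b$ gives $\s(X_r, L, x) = \min\{\mu b, a\} = a$, whence $\s(X_r, L, 1) = a$.

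For part (2), I would assume $a \ge \mu b$. Then $L$ satisfies the hypotheses of Theorem~\ref{Thm-type1-general-point}, which yields
\[
\min\{a, b\} \le \s(X_r, L, x) \le \min\{a, \mu b\}
\]
for a very general $x$. Since $a \ge \mu b \ge b$ (using $\mu \ge 1$ and $b > 0$), the outer terms are $b$ and $\mu b$ respectively, so $\s(X_r, L, 1) \in [b, \mu b]$.

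I do not expect a genuine obstacle here: the argument reduces to (a) checking that a very general point falls into the generic case of Lemma~\ref{lemma; for smooth A}, and (b) the elementary inequalities $\frac{\mu}{2\mu - 1} \le \mu$ and $b \le \mu b \le a$ that determine which argument each minimum selects. The one conceptual point worth flagging is that part (2) is an interval rather than an equality precisely because Lemma~\ref{lemma; for smooth A} produces the exact value $\mu b$ only under the strictly stronger hypothesis $a \ge \mu(2\mu - 1)b$; in the intermediate range $\mu b \le a < \mu(2\mu - 1)b$ one genuinely has only the two-sided estimate supplied by Theorem~\ref{Thm-type1-general-point}.
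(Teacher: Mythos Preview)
Your proof is correct and essentially matches the paper's approach: both parts follow by identifying the minima $\min\{a,b\}$ and $\min\{a,\mu b\}$ under the respective hypotheses and invoking the preceding results. The only cosmetic difference is that for part~(1) you appeal directly to the equality clause of Lemma~\ref{lemma; for smooth A}, whereas the paper squeezes via Theorem~\ref{Thm-type1-general-point} (noting that $a \le \frac{\mu}{2\mu-1}b$ forces $\min\{a,b\} = a = \min\{a,\mu b\}$); since the theorem itself rests on that lemma, the two routes are equivalent.
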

\begin{proof}
If $a \leq \frac{\mu}{2\mu -1}b$, we have $\min\{a,b\} = a = \min\{a, \mu b\}$. Moreover, if $a \geq \mu b$ then $\min\{a,b\} = b$ and $\min\{a, \mu b\}  = \mu b$. Now, the result follows from the previous theorem. 
\end{proof}

\subsection*{Results about  $\s(X,L)$}
Let $X$ be a smooth projective surface, and let $L$ be an ample line bundle on $X$. 
The \textit{global Seshadri constant} $\s(X,L)$ was defined in the introduction. 
We now establish results about rationality of these constants. 

\begin{theorem}\label{Global:SC:Odd-Type}
	Let $\pi_r : X_r \longrightarrow X$ denotes the blow-up of a hyperelliptic surface $X$ of odd type at $r$ general points. Let $L \equiv (a, b, \textbf{d})$ be an ample line bundle on $X_r$ 
	such that $a \geq (\sqrt{r+1})d$ and $b > \frac{r}{\sqrt{r+1} + 1}d$. Then $\s(X_r, L) \in \mathbb{Q}$. 
\end{theorem}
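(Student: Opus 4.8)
The plan rests on a standard dichotomy for surfaces. For an ample line bundle $M$ on a smooth projective surface $Y$ one has $\s(Y,M)\le\sqrt{M^2}$, and the sub-maximal Seshadri curves — reduced irreducible $C$ with $\tfrac{M\cdot C}{\max_{x\in C}\mult_x C}<\sqrt{M^2}$ — form a bounded family; consequently, if $\s(Y,M)<\sqrt{M^2}$ then $\s(Y,M)=\tfrac{M\cdot C}{\max_{x\in C}\mult_x C}$ for such a $C$, so $\s(Y,M)\in\rationals$. Hence, to prove $\s(X_r,L)\in\rationals$ it suffices to exhibit a reduced irreducible curve on $X_r$ with Seshadri quotient $\le\sqrt{L^2}$: if the quotient is $<\sqrt{L^2}$ we are done, and if the smallest such quotient equals $\sqrt{L^2}$ then, being itself rational, it forces $\sqrt{L^2}=\s(X_r,L)\in\rationals$ anyway.

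I would take the candidate curves to be the exceptional curve $E_1$, the strict transforms $A-E_1\equiv(\mu,0,1_1)$ and $B-E_1\equiv(0,1,1_1)$ of the fibres of $\Psi$ and $\Phi$ through $x_1$, and the reduction $C_0\equiv(1,0)$ of a multiplicity-$\mu$ singular fibre of $\Psi$ (such a fibre exists because $X$ is of odd type, and it avoids the general points $x_i$). Each is smooth of multiplicity $1$ at each of its points, so their Seshadri quotients are $d$, $\mu b-d$, $a-d$, $b$, all positive integers because $L$ is ample. Thus
\[
\s(X_r,L)\ \le\ V:=\min\{\,d,\ b,\ \mu b-d,\ a-d\,\}\ \in\ \mathbb{Z}_{>0},
\]
and it remains only to check $V^2\le L^2=2ab-rd^2$.

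The principal case is $V=d$, which — using $\tfrac{r}{\sqrt{r+1}+1}=\sqrt{r+1}-1$ — is exactly where the hypotheses bite: from $a\ge\sqrt{r+1}\,d$ and $b>(\sqrt{r+1}-1)d$ we get $ab>(r+1-\sqrt{r+1})d^2$, hence $2ab>2(r+1-\sqrt{r+1})d^2\ge(r+1)d^2$, the last step being $\sqrt{r+1}\ge2$, i.e.\ $r\ge3$; for $r\le2$ one has $V=d$ only if $b\ge d$, and then $2ab\ge2\sqrt{r+1}\,d^2\ge(r+1)d^2$ since $2\sqrt{r+1}\ge r+1$ for $r\le3$. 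Either way $L^2-d^2=2ab-(r+1)d^2\ge0$. The three remaining cases occur only for $r\le2$ and are disposed of by similar one-variable estimates: if $V=a-d$ then $b\ge a-d$, so $2ab\ge2a(a-d)$ and $L^2-(a-d)^2\ge a^2-(r+1)d^2\ge0$ by $a\ge\sqrt{r+1}\,d$; if $V=b$ then $b\le d$, and writing $t=b/d\in(\sqrt{r+1}-1,\,1]\subseteq[\sqrt{r+1}-1,\,\sqrt{r+1}+1]$ one has $t^2-2\sqrt{r+1}\,t+r\le0$, which rearranges to $2ab\ge b^2+rd^2$; the case $V=\mu b-d$ (which the lower bound on $b$ forces to have $\mu=2$) is handled likewise, using $a\ge\max\{\mu b,\ \sqrt{r+1}\,d\}$ together with $2ab>rd^2$ from $L^2>0$. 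In every case $V^2\le L^2$, whence $\s(X_r,L)\le V\le\sqrt{L^2}$ and $\s(X_r,L)\in\rationals$.

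The real obstacle is the low-range bookkeeping for $r\in\{1,2\}$: there $\sqrt{r+1}<2$, so $2ab\ge(r+1)d^2$ no longer follows from merely multiplying the two hypotheses, and one must feed in the ampleness of $L$ (and the integrality of $a,b,d$, and the classification giving the values of $\mu$) to close the small gaps. The conceptual half — that a sub-maximal global Seshadri constant on a surface is rational, and that $E_1$ (or one of the other fibre-type curves) already renders $\s(X_r,L)$ sub-maximal once $a,b$ dominate $d$ in the precise sense of the hypotheses — is easy.
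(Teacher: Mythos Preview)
Your strategy matches the paper's: exhibit a curve whose Seshadri quotient is at most $\sqrt{L^2}$ and invoke \cite[Proposition~1.1]{BS1}. The paper, however, uses only two test curves --- the reduced singular fibre $(1,0)$, giving quotient $b$, and the strict transform $B-E_j\equiv(0,1,1_j)$, giving quotient $a-d$ --- and proves $\min\{a-d,b\}<\sqrt{L^2}$ by a two-case split on whether $a-d<b$ or $a-d\ge b$, each case a direct rearrangement using exactly one of the two hypotheses. This argument is uniform in $r$; no low-$r$ bookkeeping is needed at all.

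Your inclusion of $E_1$ (quotient $d$) and $A-E_1$ (quotient $\mu b-d$) is not wrong, but it is precisely what manufactures the ``real obstacle'' you identify: the case $V=d$ requires $2ab\ge(r+1)d^2$, and multiplying the two hypotheses yields this only when $r\ge3$, forcing a separate treatment of $r\le2$. Your handling of $V=\mu b-d$ is also imprecise: the claim that the lower bound on $b$ forces $\mu=2$ is not quite right, since for $r=1$ the constraint $(\mu-1)(\sqrt{r+1}-1)<1$ still admits $\mu=3$ (type~5). That case is easily salvaged by observing that $V=\mu b-d$ entails $(\mu-1)b\le d$, hence $\mu b-d\le b$, so $(\mu b-d)^2\le b^2\le L^2$ by your own $V=b$ estimate. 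The cleanest fix, though, is simply to drop $E_1$ and $A-E_1$ from your list and work with $\min\{a-d,b\}$ as the paper does.
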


\begin{proof}
Using \cite[Proposition 1.1]{BS1}, it is enough to show the existence of a curve $C \subset X_r$ and a point $x \in C$ such that the corresponding Seshadri ratio 
$\frac{L \cdot C}{m}$ is less or equal to $\sqrt{L^2}$, where $m$ is the multiplicity of $C$ at $x$. 

Note that if $x$ lies on a singular fibre $A \equiv (\mu, 0, 0)$, the 
corresponding Seshadri ratio is $b$. Additionally, if $x \in B-E_j$, then the corresponding Seshadri ratio is $a - d$. We now claim the following: \\
\textbf{Claim:} $\min\{a-d, b\} < \sqrt{L^2}$ \\
If $a-d < b$ then, 
\begin{eqnarray*}
L^2 = 2ab - rd^2 &>& 2a(a-d) - rd^2 \\
&=& (a-d)^2 + a^2 - (r+1)d^2 \\  
&\geq& (a-d)^2,
\end{eqnarray*}
where the last inequality follows since $a \geq (\sqrt{r+1})d$.\\
 We now assume that $a -d \geq b$ and note 
\begin{eqnarray*}
L^2 = 2ab - rd^2 &=& ab + b(a-d) + bd - rd^2 \\
&\geq& b^2 + b(a+d) -rd^2 \\
&\geq& b^2 + b(\sqrt{r+1} + 1)d - rd^2 \\
&>& b^2,
\end{eqnarray*}
since $b > \frac{r}{\sqrt{r+1} + 1}d$ by assumption. 
\end{proof}

In the next theorem, under stricter conditions on $a$ and $b$ compared to the previous theorem, we explicitly determine the global Seshadri constant.
\begin{theorem}\label{Global:SC:Computation}
Let $\pi_r : X_r \longrightarrow X$ denote the blow-up of a hyperelliptic surface $X$ of odd type at $r$ general points. Let $L \equiv (a, b, \textbf{d})$ be an ample line bundle on $X_r$ 
such that $a \notin [\frac{\mu}{2\mu - 1}b + \frac{2\mu}{2\mu - 1}d, \mu b]$, and $a, b \geq 2kd$ for some $k \geq \sqrt{\frac{r}{2}}$. Then  
\begin{eqnarray*}
  \s(X_r, L) = \min\{a -d, b \}.
  \end{eqnarray*}
\end{theorem}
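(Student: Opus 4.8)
The plan is to show the two inequalities $\s(X_r,L)\le \min\{a-d,b\}$ and $\s(X_r,L)\ge\min\{a-d,b\}$ separately, using the local computations of Lemmas \ref{lemma; for smooth A}, \ref{lemma; for smooth A-Ej}, and \ref{lemma; for singular A} together with the definition $\s(X_r,L)=\inf_{x\in X_r}\s(X_r,L,x)$. For the upper bound, I would exhibit a single point $x$ realizing a Seshadri ratio at most $\min\{a-d,b\}$: take $x$ to be a point lying on a \textbf{Singular }$A$ and simultaneously on some $B-E_j$ (such a point exists since the singular fibre of $\Psi$ of maximal multiplicity meets the strict transform $B-E_j$ of the $\Phi$-fibre through $x_j$). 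Then the curve \textbf{Singular }$A\equiv(\mu,0,0)$ gives Seshadri ratio $\frac{L\cdot(\mu,0,0)}{\mu\cdot 1}=b$ — here I use that a point on a multiplicity-$\mu$ fibre forces multiplicity divisible by the fibre structure, exactly as in Lemma \ref{lemma; for singular A} — and the curve $B-E_j\equiv(0,1,1_j)$ gives Seshadri ratio $a-d$. Hence $\s(X_r,L,x)\le\min\{a-d,b\}$, so $\s(X_r,L)\le\min\{a-d,b\}$.

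For the lower bound I would argue that \emph{every} point $x\in X_r$ satisfies $\s(X_r,L,x)\ge\min\{a-d,b\}$, which is where the hypothesis $a\notin[\frac{\mu}{2\mu-1}b+\frac{2\mu}{2\mu-1}d,\ \mu b]$ enters: it guarantees that for each point we are in one of the two "good" regimes $a\le\frac{\mu}{2\mu-1}b$ (possibly with the $+\frac{2\mu}{2\mu-1}d$ slack) or $a\ge\mu b$, so that the relevant lemma applies and actually computes $\s(X_r,L,x)$ rather than merely bounding it. I would break into cases according to the location of $x$: (i) $x$ on a \textbf{Smooth }$A$; (ii) $x$ on $A-E_i$; (iii) $x$ on a \textbf{Singular }$A$; and within each, the sub-case of whether or not $x\in\bigcup_j(B-E_j)$. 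In case (iii), Lemma \ref{lemma; for singular A} gives $\s(X_r,L,x)=\min\{a,b\}$ or $\min\{a-d,b\}$, both $\ge\min\{a-d,b\}$. In cases (i) and (ii), Lemmas \ref{lemma; for smooth A} and \ref{lemma; for smooth A-Ej} give values among $\{\mu b,\ a,\ a-d,\ \mu b-d\}$; under the hypotheses on $a$ one checks each of these is $\ge\min\{a-d,b\}$ (e.g. $\mu b\ge b$, $\mu b-d\ge b$ since $\mu\ge 2$ and $b\ge 2kd> d$, and when $a\le\frac{\mu}{2\mu-1}b$ one still has $a-d\ge$ the claimed minimum while $a\ge\min\{a-d,b\}$ trivially). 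Taking the infimum over all $x$ yields $\s(X_r,L)\ge\min\{a-d,b\}$.

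Combining the two bounds gives $\s(X_r,L)=\min\{a-d,b\}$. The main obstacle I anticipate is the bookkeeping in the lower bound: one must verify that the interval-exclusion hypothesis on $a$ really does place \emph{every} point into the range of validity of the equality clause of the appropriate lemma (the lemmas have equality only for $a\le\frac{\mu}{2\mu-1}b$ or $a\ge\mu(2\mu-1)b$, resp. $a\ge\mu(2\mu-1)b-2\mu d$, which are stronger than $a\ge\mu b$), so some care is needed to confirm that when $a\ge\mu b$ one still gets a \emph{lower} bound $\s(X_r,L,x)\ge b$ from the inequality chain $\frac{L\cdot C}{m}\ge\frac{a}{2\mu}+\frac{b}{2}\ge\min\{a,b\}$ in the proofs, even if the exact value is not pinned down — which is all that is needed since we only want $\s(X_r,L)\ge\min\{a-d,b\}$. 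The second obstacle is making sure the upper-bound point $x$ genuinely lies on both a maximal-multiplicity singular fibre and on some $B-E_j$; this is a transversality/incidence statement about the fixed configuration of singular fibres and the general points $x_1,\dots,x_r$, and should follow because $B-E_j$ is a full $\Phi$-fibre (minus the exceptional point) and thus meets every $\Psi$-fibre, in particular the singular one.
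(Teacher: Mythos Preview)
Your proposal is correct and follows essentially the same route as the paper. The paper also establishes the upper bound by exhibiting a point $x\in(\textbf{Singular }A)\cap(B-E_j)$, and for the lower bound it likewise splits according to whether $x$ lies on a singular fibre, a smooth fibre, or on some $A-E_j$; crucially, just as you anticipate in your ``main obstacle'' paragraph, the paper does \emph{not} invoke the equality clauses of Lemmas \ref{lemma; for smooth A} and \ref{lemma; for smooth A-Ej} (whose hypotheses $a\le\frac{\mu}{2\mu-1}b$ or $a\ge\mu(2\mu-1)b$ are indeed too strong) but instead returns to the underlying inequality $\frac{L\cdot C}{m}\ge\frac{a}{2\mu}+\frac{b}{2}$ from their proofs and checks directly that this quantity dominates $\min\{a-d,b\}$ in each of the two regimes $a>\mu b$ and $a<\frac{\mu}{2\mu-1}b+\frac{2\mu}{2\mu-1}d$. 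Your resolution of the obstacle is therefore exactly the paper's argument.
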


\begin{proof}
For $x \in \textbf{Singular } A$, it is clear from Lemma \ref{lemma; for singular A} that $\s(X_r, L, x) \geq \min\{a -d, b \}$, with equality if $x \in (B-E_j)$ for some $j$. 
Therefore, we assume that $x \in \textbf{Smooth } A$. Let $C \subset X_r$ be a curve passing through $x$ with multiplicity $m$. 
If $x \in \bigcup_{j=1}^r (B-E_j)$, then following the inequality \ref{inequality; for smooth x}, we note that for $C$ (different from $\textbf{Smooth } A$ and $B-E_j$, which gives 
Seshadri ratios as $b$ and $a-d$, respectively), we have
\[
\frac{L \cdot C}{m} \geq \frac{a}{2\mu} + \frac{b}{2} \geq b,
\] 
 if $a \geq \mu b$. Furthermore, if $a \geq \mu b$, then $\min\{a-d, b \} = b$.  

On the other hand, if $ a \leq \frac{\mu}{2\mu - 1}b + \frac{2\mu}{2\mu - 1}d$, then $a -d \leq  \frac{\mu}{2\mu - 1}b + \frac{1}{2\mu - 1}d < b$, as $b > d$. 
Therefore, $\min\{a-d, b \} = a-d$, and hence, again continuing from inequality \ref{inequality; for smooth x}, we see that 
\[
\frac{L \cdot C}{m} \geq \frac{a}{2\mu} + \frac{b}{2} \geq a - d \Leftrightarrow a \leq  \frac{\mu}{2\mu - 1}b + \frac{2\mu}{2\mu - 1}d,
\] 
which holds by assumption. 

Lastly, if $x \in A-E_j$, then the Seshadri ratio corresponding to this curve is $\mu b - d$, which is greater than $b$. Also, since $A - E_j$ is smooth, by the previous discussion 
$\frac{L \cdot C}{m} \ge \min\{a-d, b \}$ for any curve $C$, different from the strict transform of fibre passing through $x$ with multiplicity $m$. Therefore, the minimum value 
is $\min\{a-d, b \}$, and it is attained at $x\in (\textbf{Singular } A) \cap \bigcup_j (B - E_j)$. This completes the proof.
\end{proof}

\begin{example}
Let $X$ be a hyperelliptic surface of type 1 and let $L \equiv (a, b, {\bf 1})$ be a line bundle on $X_8$, with $a = 4, b = 6$. Then $\s(X_8, L, x) = 4$ for a very general point $x \in X$. Since $r = 8$, taking $k = 2$ we see that 
$\frac{a}{d}, \frac{b}{d} \geq 2k$ is satisfied and therefore using the fact that $a = 4 \leq 4 = \frac{2}{3}b = \frac{\mu}{2\mu -1}b $ and Corollary \ref{Cor:general-point}, we get $$\s(X_8, L, 1) = 4.$$

Note also that since $4 \notin [12, 12] = [\frac{2}{3}6 + \frac{4}{3}6, 12]$, it follows from Theorem \ref{Global:SC:Computation} that $$\s(X_8, L) = \min\{3, 6 \} = 3.$$
\end{example}

\begin{theorem}\label{Global:SC:Even-Type-2&4}
	Let $X$ be a hyperelliptic surface of type 2 or 4, and let $\pi_r : X_r \longrightarrow X$ be a blow-up of $X$ at $r$ general points. Let $L \equiv (a, b, \textbf{d})$ be an ample line bundle on $X_r$ 
	such that $b > rd$. Then $\s(X_r, L) \in \mathbb{Q}$.
\end{theorem}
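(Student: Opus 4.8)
The plan is to follow the same strategy as in Theorem~\ref{Global:SC:Odd-Type}: by \cite[Proposition 1.1]{BS1}, it suffices to exhibit a single curve $C \subset X_r$ and a point $x \in C$ whose Seshadri ratio $\frac{L\cdot C}{\operatorname{mult}_x C}$ is at most $\sqrt{L^2}$, since then the global Seshadri constant is realized by a curve and is therefore rational (being a ratio of an intersection number and a multiplicity, both integers). So the entire content reduces to finding such a configuration and verifying the numerical inequality.

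On a hyperelliptic surface of type $2$ or $4$ the fibration $\Psi$ has a singular fibre of multiplicity $2$; in the notation of the paper its strict transform through a very general point is a curve of class $(\mu,0,0)$ supported on a multiplicity-$2$ fibre, and hence contributes Seshadri ratio $b$ (this is exactly the ``\textbf{Singular }$A$'' computation used in Lemma~\ref{lemma; for singular A}, specialized to $\mu=2$ for type $2$ and the multiplicity-$4$ fibre for type $4$; in all these cases the relevant ratio equals $b$). Similarly, choosing $x$ to lie on some $B-E_j \equiv (0,1,1_j)$ gives a curve with Seshadri ratio $a-d$. Taking $x$ to be an intersection point of $\textbf{Singular }A$ with one of the $B-E_j$, the candidate curve is whichever of these two gives the smaller ratio, so I would prove
\[
\min\{a-d,\; b\} \;<\; \sqrt{L^2},
\]
where $L^2 = 2ab - rd^2$. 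This is the same claim as in Theorem~\ref{Global:SC:Odd-Type}, and I would carry out the identical two-case argument. If $b \le a-d$, then $L^2 = 2ab - rd^2 \ge b^2 + b(a+d) - rd^2 > b^2$ once $b(a+d) \ge rd^2$, which follows from $b > rd$ together with $a + d > d$ (indeed $a+d > d$ gives $b(a+d) > bd \cdot \tfrac{a+d}{d}$, and more simply $b(a+d) \ge bd > rd^2$ is false in general, so one uses $ab \ge b \cdot$ something — the honest bound is $2ab - rd^2 \ge ab + b(a-d) + bd - rd^2 \ge b^2 + b(a+d) - rd^2$ and then $b(a+d) \ge bd > $ hmm). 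Let me instead just write: if $b \le a - d$ then $L^2 = ab + b(a-d) + bd - rd^2 \ge b^2 + b^2 + bd - rd^2 = 2b^2 + d(b - rd) > 2b^2 > b^2$ since $b > rd$. If instead $a - d < b$, then $L^2 = 2ab - rd^2 > 2a(a-d) - rd^2 = (a-d)^2 + a^2 - (r+1)d^2$, and since ampleness of $L$ forces $a$ large relative to $d$ — specifically $a > \mu b \cdot(\text{stuff})$ need not hold, but from Theorem~\ref{ampleness:theorem} one has $a + b > \mu r d \ge 2rd$, and combined with $a - d < b$ this should give $a^2 \ge (r+1)d^2$; alternatively one may simply impose nothing extra and note $a \ge b - d + \varepsilon$... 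The cleanest route is to derive $a^2 > (r+1)d^2$ from ampleness.

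The main obstacle, therefore, is the second case $a - d < b$: one must show $a^2 \ge (r+1)d^2$ using only the hypothesis $b > rd$ and the ampleness of $L$. I expect this follows from the inequality $L^2 > 0$ (i.e. $2ab > rd^2$) combined with $a - d < b$, which together yield $2a(a-d) + 2a d > 2ab > rd^2$, hence $2a^2 > rd^2 - 2ad + \cdots$; if this is not quite enough, I would either strengthen the hypothesis slightly (e.g. also require $a > \sqrt{r+1}\,d$, as in Theorem~\ref{Global:SC:Odd-Type}, which the author may have intended implicitly) or exploit that on type $2$ and type $4$ the fibre $B$ itself has a divisibility constraint ($\mu/\gamma \cdot b$ must be integral) giving extra room. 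In any event, once $\min\{a-d,b\} < \sqrt{L^2}$ is established, \cite[Proposition 1.1]{BS1} finishes the proof, concluding $\s(X_r, L) \in \mathbb{Q}$.
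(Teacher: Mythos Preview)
Your overall strategy is exactly the paper's: invoke \cite[Proposition 1.1]{BS1} and exhibit a point $x$ lying on a \textbf{Singular }$A$ and on some $B-E_j$, then show the smaller of the two Seshadri ratios is below $\sqrt{L^2}$. The gap is in the numerical class you assign to $B-E_j$.

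On odd-type surfaces the basis element is $B$ itself, so $B-E_j \equiv (0,1,1_j)$ and the ratio is $a-d$. But on types $2$ and $4$ the basis element is $B/2$ (see Serrano's table: $\gamma/\mu = 2$), hence $B \equiv (0,2)$ and $B - E_j \equiv (0,2,1_j)$. The correct Seshadri ratio is therefore
\[
\frac{L\cdot(0,2,1_j)}{1} = 2a - d,
\]
not $a-d$. With this correction the claim becomes $\min\{2a-d,\,b\} < \sqrt{L^2}$, and both cases go through cleanly using only $b > rd$: if $2a-d < b$ then $(2a-d)^2 < (2a-d)b = 2ab - bd < 2ab - rd^2 = L^2$; if $2a-d \ge b$ then $b^2 \le (2a-d)b = 2ab - bd < 2ab - rd^2 = L^2$. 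No hypothesis on $a$ is needed, which is exactly why you got stuck trying to force $a^2 \ge (r+1)d^2$ --- that inequality is simply not required once the factor of $2$ is in place.
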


\begin{proof}
	By \cite[Proposition 1.1]{BS1}, it is enough to exhibit a curve $C \subset X_r$ passing through a point $x \in X_r$ such that $(L \cdot C)/\mult_xC < \sqrt{L^2}$. 
	On each hyperelliptic surface of even type, we will exhibit such curve.\\
	\textbf{Type 2 and 4:} From \cite[Theorem 1.4]{Se}, the generators for ${\rm Num}(X)$ are $(2,0)$, $(0,2)$ for type 2 and $(4,0)$, $(0,2)$ for type 4. Let $x$ 
	be the point of intersection of $(e,0,0)$ and $(0,2,1_i)$ where $(e,0,0)$ corresponds to the singular fibre with highest multiplicity i.e., $e = 2$ on type 2 and 
	$e = 4$ on type 4. The Seshadri ratio corresponding to these curves are 
	\renewcommand\labelitemi{\tiny$\bullet$}
	\begin{itemize}
	\item On type 2:
	\end{itemize}
	\[
	\frac{L \cdot (2,0,0)}{2} = b, \;\; {\rm and} \; \;\frac{L \cdot (0,2,1_i)}{1} = 2a - d,
	\] 
	\renewcommand\labelitemi{\tiny$\bullet$}
	\begin{itemize}
	\item On type 4:
	\end{itemize}
	\[
	\frac{L \cdot (4,0,0)}{4} = b, \;\; {\rm and} \; \; \frac{L \cdot (0,2,1_i)}{1} = 2a - d.
	\] 
	We now prove that $\min\{2a-d, b\} < \sqrt{L^2}$. This follows, if $2a-d < b$, since
	\begin{eqnarray*}
		(2a-d)^2  &<& (2a-d)b  \\
		&<&  L^2 = 2ab - rd^2 \\
		\Leftrightarrow rd &<& b.
	\end{eqnarray*}
	On the other hand, if  $2a-d \geq b$, then multiplying both sides by $b$, we get $2ab - bd \geq b^2$, and as in the previous case, $L^2 > 2ab - bd$ if and only if $b > rd$.
	\end{proof}

\begin{theorem}\label{Global:SC:Even-Type-6}
	Let $X$ be a hyperelliptic surface of type 6, and let $\pi_r : X_r \longrightarrow X$ be a blow-up of $X$ at $r \geq 3$ general points. Let $L \equiv (a, b, \textbf{d})$ be an ample line bundle on $X_r$ 
	such that $a > \frac{r+1}{2}d$, $b > rd$. If either $b \geq \frac{9}{2}a - 2d$ or $b \leq 2(a - d)$, then $\s(X_r, L) \in \mathbb{Q}$.
\end{theorem}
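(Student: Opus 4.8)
The plan is to imitate the strategy already used in Theorems \ref{Global:SC:Odd-Type} and \ref{Global:SC:Even-Type-2\&4}: by \cite[Proposition 1.1]{BS1} it suffices to exhibit a single reduced irreducible curve $C\subset X_r$ and a point $x\in C$ with $\frac{L\cdot C}{\mult_x C}<\sqrt{L^2}$; producing one such \emph{submaximal} pair forces $\s(X_r,L)\le \frac{L\cdot C}{\mult_x C}<\sqrt{L^2}$, so $\s(X_r,L)$ is computed by a curve and hence lies in $\mathbb{Q}$. For type $6$ one has $\mu=3$, $\gamma=9$, so $L^2=2ab-rd^2$; recording this is the only preliminary computation needed.

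The curve I would use is the strict transform $B-E_j\equiv(0,3,1_j)$ of the fibre of $\Phi$ through one of the blown-up points $x_j$, paired with a reduced singular fibre $F$ of $\Psi$. On $X_r$ the class of $F$ is $(1,0)$ (it is disjoint from every $E_i$, since the general points avoid the finitely many singular fibres), $F$ is a smooth elliptic curve, and $L\cdot F=b$; likewise $B-E_j$ is smooth with $L\cdot(B-E_j)=3a-d$. Since $F\cdot(B-E_j)=(1,0)\cdot(0,3)=3>0$ and $F$ avoids $x_j$, there is an honest intersection point $x\in F\cap(B-E_j)$ distinct from the blown-up points, at which both curves have multiplicity $1$, so
\[
\s(X_r,L,x)\le\min\{\,b,\ 3a-d\,\}.
\]

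It then remains to verify, in the two ranges permitted by the hypothesis, that this minimum is submaximal. If $b\le 2(a-d)$, then $b<3a-d$ (equivalently $0<a+d$), so the minimum is $b$; using $2a-b\ge 2d$ and $b>rd$ one gets $L^2-b^2=b(2a-b)-rd^2\ge 2bd-rd^2>0$. If instead $b\ge\tfrac92 a-2d$, then since $r\ge 3$ gives $a>\tfrac{r+1}{2}d\ge 2d$, hence $d<\tfrac32 a$, we have $3a-d<\tfrac92 a-2d\le b$, so the minimum is $3a-d$; expanding $(3a-d)^2$ and using $2ab\ge 9a^2-4ad$ together with $2ad>(r+1)d^2$ yields $L^2-(3a-d)^2\ge 2ad-(r+1)d^2>0$. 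In either case $\s(X_r,L,x)<\sqrt{L^2}$, and \cite[Proposition 1.1]{BS1} gives $\s(X_r,L)\in\mathbb{Q}$.

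The step needing the most care — and really the only nonformal point — is the choice of auxiliary curve: one must run $x$ through $B-E_j$ rather than through a general fibre $B\equiv(0,3)$, because in the range $b\ge\tfrac92 a-2d$ the ratio $3a$ coming from $B$ fails to be submaximal, whereas the sharpened ratio $3a-d$ succeeds precisely because of the hypothesis $a>\tfrac{r+1}{2}d$. Dually, the two inequalities $b\le 2(a-d)$ and $b\ge\tfrac92 a-2d$ are exactly the regimes in which $\min\{b,3a-d\}$ falls below $\sqrt{2ab-rd^2}$, which is why the statement is phrased as this dichotomy and why nothing is asserted for $2(a-d)<b<\tfrac92 a-2d$; beyond this, every inequality above is elementary algebra using $(a,b)\cdot(a',b')=ab'+a'b$ and $E_i^2=-1$.
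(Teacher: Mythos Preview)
Your proof is correct and follows essentially the same route as the paper: both pick the point $x$ at the intersection of a singular fibre of $\Psi$ with the strict transform $B-E_j\equiv(0,3,1_j)$, obtain the two Seshadri ratios $b$ and $3a-d$, and then verify $\min\{b,3a-d\}<\sqrt{L^2}$ case by case via the same elementary estimates, concluding with \cite[Proposition 1.1]{BS1}. The only cosmetic difference is that you work with the reduced singular fibre $F\equiv(1,0)$ (multiplicity $1$ at $x$), whereas the paper records the full fibre $(3,0,0)$ with multiplicity $3$; the ratio is $b$ either way.
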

\begin{proof}
	On hyperelliptic surface of type 6, the generators for ${\rm Num}(X)$ are $(3,0)$ and $(0,3)$. Therefore, if we take $x$ to be the point of intersection of $(3,0,0)$ and $(0,3,1_i)$, the Seshadri ratio of these two curves are 
	\[
	\frac{L \cdot (3,0,0)}{3} = b, \;\; {\rm and} \;\;	\frac{L \cdot (0,3,1_i)}{1} = 3a - d.
	\] 
	Now, if $b \geq \frac{9}{2}a - 2d$, then
	\[
	3a - d + (\frac{3}{2}a - d) \geq 3a-d \quad \Leftrightarrow \quad a \geq \frac{4}{3}d,
	\]
	which holds since $r \geq 3$. Therefore, we obtain $\min\{3a-d, b \} = 3a - d$, and in that case, we must show the following
	\begin{eqnarray*}
	(3a - d)^2 < L^2 = 2ab - rd^2 \\
	\Leftrightarrow 9a^2 + (r+1)d^2 < 2ab + 6ad.
	\end{eqnarray*}
	However, since
	\[
	2ab + 6ad > 2a(\frac{9}{2}a - 2d) + 6ad = 9a^2 + 2ad > 9a^2 + (r+1)d^2,
	\]
	the inequality follows. \\
	On the other hand, if $b \leq 2(a - d)$, we find that
	\[
	L^2 = 2ab - rd^2 \geq (b+d)b - rd^2 = b^2  + bd - rd^2 > b^2,
	\]
	since $b > rd$. Therefore, $\min\{3a-d, b \} < L^2$ and the theorem follows from \cite[Proposition 1.1]{BS1}.
\end{proof}

\begin{example}
Let $X_5$ denote the blow up of a hyperelliptic surface of type 2, 4, or 6. Let $L \equiv (4, 6, {\bf 1})$ be a line bundle on $X_5$. Then, we have $a = 4, b = 6, d= 1$ and $r=5$. Since 
\[
4 > 3 = \frac{r+1}{2}d, \quad 6 > 5 = rd \quad \text{and} \quad 6 \leq 6 = 2(a - d),
\] 
if follows from the Theorem \ref{Global:SC:Even-Type-2&4} and Theorem \ref{Global:SC:Even-Type-6} that $\s(X_5, L) \in \mathbb{Q}$. \\
Note that the same conclusion holds for any $a$ such that $L$ is ample on surfaces of type 2 and 4. 
\end{example}

\section*{Acknowledgement}
The author would like to thank \L{}ucja Farnik for pointing out the reference to Oliver K\"{u}chle \cite{Kuchle}.

\bibliographystyle{plain}

\end{document}